\documentclass[11pt]{article}
\usepackage[a4paper,margin=1in]{geometry}
\usepackage[T1]{fontenc}
\usepackage[utf8]{inputenc}
\usepackage{amsmath,amssymb,amsthm,mathtools}
\usepackage{enumitem}
\usepackage{booktabs}
\usepackage{array}
\usepackage{xcolor}
\usepackage{hyperref}
\hypersetup{colorlinks=true,linkcolor=blue,citecolor=blue,urlcolor=blue}

\newtheorem{theorem}{Theorem}[section]
\newtheorem{proposition}[theorem]{Proposition}
\newtheorem{lemma}[theorem]{Lemma}
\newtheorem{corollary}[theorem]{Corollary}

\theoremstyle{definition}

\newtheorem{example}[theorem]{Example}
\theoremstyle{remark}
\newtheorem{remark}[theorem]{Remark}

\newcommand{\R}{\mathbb R}
\newcommand{\Z}{\mathbb Z}
\newcommand{\one}{\mathbf 1}
\newcommand{\cP}{\mathcal P}
\newcommand{\cC}{\mathcal C}
\newcommand{\cF}{\mathcal F}
\newcommand{\Ehr}{\operatorname{Ehr}}
\newcommand{\STAB}{\operatorname{STAB}}

\newcommand{\codeg}{\operatorname{codeg}}

\newcommand{\adj}{\operatorname{adj}}

\newcommand{\Ind}{\operatorname{Ind}}

\title{Transfer Matrices and Ehrhart Theory for Path and Cyclic Block Polytopes}

\author{%
  Xinru Jiang\thanks{%
    School of Mathematics and Statistics, Hainan University,
    Haikou 570228, P.R.\ China.
    E-mail: \texttt{chiang\_11110@163.com} (X.\ Jiang),
    \texttt{yangs0901@163.com} (S.\ Yang),
    \texttt{zhongyueming107@gmail.com} (Y.\ Zhong).}
  \and
  Shuai Yang\footnotemark[1]
  \and
  Yueming Zhong$^{*}$\footnotemark[1]%
}

\date{}

\begin{document}
\maketitle
\renewcommand{\thefootnote}{\fnsymbol{footnote}}
\footnotetext[1]{$^{*}$Corresponding author.}

\begin{abstract}
We study block polytopes whose variables are divided into equal-size blocks and whose local inequalities bound the total contribution of adjacent blocks.  For blocks arranged along a path, we develop a transfer-matrix enumeration in the length direction.  We also carry out an Ehrhart-theoretic analysis in the dilation direction.  The original transfer matrix admits a compression to a weighted height matrix, and the numerator and denominator of the length generating function are described by explicit recurrences and determinant formulas.  We also study the cyclic analogue, where the length generating function is governed by the logarithmic derivative of the same determinant.  On the Ehrhart side, the path polytopes with at least two blocks and the even cyclic polytopes are stable-set polytopes of perfect graphs; consequently they are Gorenstein of codegree $2a+1$ (independent of the number of blocks $m\ge2$), satisfy an explicit Ehrhart--Macdonald reciprocity, and have palindromic unimodal $h^*$-polynomials of degree $a(m-2)$.  Odd cyclic polytopes have denominator exactly two, and their lattice-point enumerators are Ehrhart quasipolynomials of period dividing two.  Further combinatorial interpretations and open problems are discussed.
\end{abstract}

\noindent\textbf{Keywords:} Ehrhart polynomial; transfer matrix; stable-set polytope; perfect graph; Gorenstein polytope; $h^*$-polynomial; compressed polytope.\par
\medskip
\noindent\textbf{2020 Mathematics Subject Classification:} 52B20, 52B12, 05A15, 05C69, 13F65.

\section{Introduction}

Ehrhart theory studies the number of lattice points in integral dilates of a lattice polytope.  If $\mathcal{P}$ is a $d$-dimensional lattice polytope (throughout this introductory paragraph, $\mathcal{P}$ denotes a generic lattice polytope, not to be confused with the specific polytopes $\cP_m^{(a)}$ defined below), then its lattice-point enumerator $L_{\mathcal{P}}(q)=\#(q\mathcal{P}\cap\Z^d)$ is a polynomial in $q$, and its generating function has the form
\[
\Ehr_{\mathcal{P}}(z)=\sum_{q\ge0}L_{\mathcal{P}}(q)z^q=\frac{h_{\mathcal{P}}^*(z)}{(1-z)^{d+1}}.
\]
The numerator $h_{\mathcal{P}}^*(z)$ is the Ehrhart $h^*$-polynomial.  Its nonnegativity, symmetry, unimodality, and relation to Gorenstein toric rings are central themes in Ehrhart theory and algebraic combinatorics; see, for example, \cite{Athanasiadis2004,BeckRobins2015,BrunsRoemer2007,Ehrhart1962,Macdonald1971,Stanley1980,Stanley1996}.  Transfer-matrix methods provide a complementary way to enumerate families with local constraints; the interaction between transfer matrices and Ehrhart theory has been developed in several settings \cite{EngstromKohl2018}, where transfer matrices are used to study $h^*$-polynomials of polytopes defined by local combinatorial constraints.  In contrast with that general framework, our emphasis is on an explicit compressed matrix and closed recursive formulas for a concrete block family.  Weighted graph models with inequalities of the form that adjacent vertex weights have bounded sum were studied by B\'ona and Ju, and by B\'ona, Ju, and Yoshida, who proved rationality results for the associated generating functions and related them to graph polytopes and Ehrhart series \cite{BonaJu2006,BonaJuYoshida2007}.  Fractional stable-set polytopes, which appear naturally in the odd cyclic case, have also been studied from the viewpoint of Ehrhart series and Gorenstein rings \cite{HamanoHibiOhsugi2018,Miyazaki2021,Miyazaki2023}.  Closely related length-direction generating functions occur in work of Xin, Zhong, and collaborators on graph polytopes, magic labellings, and unit-primitive matrix recurrences, where path- or cycle-like graph families lead to rational generating functions, explicit recurrences, and Chebyshev-type closed forms \cite{XinXuZhangZhong2023,XinZhong2023,XinZhongZhou2026}.  Two further strands of this group\'s work are methodologically relevant here.  The constant-term algebra of type $A$ \cite{XinZhangZhouZhong2025} arose from residue computations for the Ehrhart series of the Birkhoff polytope, providing a constant-term framework parallel to the transfer-matrix approach used in the present paper.  Separately, parity-unimodality questions for rational $q$-Catalan polynomials \cite{XinZhong2020} employ generating-function and constant-term arguments of a similar flavour.

The present paper is motivated by these bounded adjacent-sum models, but it differs from the classical weighted-graph enumeration in two ways.  The bounded adjacent-sum height model, in which adjacent vertex weights on a path or cycle must sum to at most one, has been studied from a generating-function perspective in the work of B\'ona, Ju, and Yoshida \cite{BonaJu2006,BonaJuYoshida2007}.  The new point here is the block refinement: each vertex of the underlying path or cycle is replaced by a block of $a$ nonnegative variables.  The single-vertex-weight case corresponds to $a=1$ at the level of block heights, but for $a>1$ the block polytope has a richer structure absent from the scalar model.  Passing from a block to its total weight produces the binomial layer weights $\binom{s+a-1}{a-1}$ and a natural compression of the transfer matrix.  This block structure yields a stable-set polytope interpretation absent from the vertex-weight model.  Second, we study the same family simultaneously in the length direction and in the dilation direction.  The former gives rational transfer series with explicit numerator and denominator formulas, while the latter identifies the polytopes with stable-set polytopes and yields Gorenstein, reciprocity, and $h^*$-unimodality results.

The novelty of the present work lies not in the formal rationality of transfer-matrix series alone, but in the explicit compression and evaluation of a block-weighted family whose dilation direction admits a clean Ehrhart-theoretic interpretation.  The binomial layer weights record the internal compositions inside each block, while the compressed matrix records the adjacent capacity constraint.  The main contribution is therefore twofold.  We give an explicit compression and recursive evaluation of the length-direction transfer series for a block refinement of bounded-adjacent-sum models, and we connect the corresponding path and cyclic polytopes to Ehrhart theory through stable-set and fractional stable-set polytopes.  The cyclic case shares the same transfer denominator as the path case, but its numerator is the logarithmic derivative of that denominator.

Fix an integer $a\ge1$.  For $1\le i\le m$, let
\[
B_i=\{x_{i,1},x_{i,2},\ldots,x_{i,a}\},
\qquad
R_i=x_{i,1}+x_{i,2}+\cdots+x_{i,a}.
\]
For $m\ge2$, define the path block polytope
\begin{equation}\label{eq:intro-polytope}
\cP_m^{(a)}=
\left\{x\in\R_{\ge0}^{am}: R_i+R_{i+1}\le1,
\ 1\le i\le m-1\right\}.
\end{equation}
For $m=1$, we use the one-block convention
\[
\cP_1^{(a)}=\{x\in\R_{\ge0}^{a}:R_1\le1\}.
\]
For $m\ge2$, the inequalities $R_i\le1$ are already implied by \eqref{eq:intro-polytope}: for any feasible point $x$, $R_i(x)\le R_i(x)+R_{i+1}(x)\le1$ since $R_{i+1}(x)\ge0$.  Let
\[
L_m^{(a)}(q)=\#(q\cP_m^{(a)}\cap\Z^{am}).
\]
We also consider the cyclic block polytope, for $m\ge3$,
\begin{equation}\label{eq:intro-cyclic-polytope}
\cC_m^{(a)}=
\left\{x\in\R_{\ge0}^{am}: R_i+R_{i+1}\le1,
\ i\in\Z/m\Z\right\},
\end{equation}
where indices are read cyclically.  Its Ehrhart counting function is denoted
\[
L_{m,\mathrm{cyc}}^{(a)}(q)=\#(q\cC_m^{(a)}\cap\Z^{am}).
\]
The two parameters have different meanings.  For fixed $q$, the sequence $m\mapsto L_m^{(a)}(q)$ has a finite transfer-matrix description.  For fixed $m$, the function $q\mapsto L_m^{(a)}(q)$ is the Ehrhart polynomial of $\cP_m^{(a)}$ \cite{Ehrhart1962,BeckRobins2015}.  For the cyclic family, the same holds when $m$ is even (so that $\cC_m^{(a)}$ is a lattice polytope), while odd $m$ leads to an Ehrhart quasipolynomial of period dividing $2$.

Let
\[
S_q^{(a)}=\{u=(u_1,\ldots,u_a)\in\Z_{\ge0}^{a}: |u|:=u_1+\cdots+u_a\le q\}.
\]
The condition $|u|\le q$ rather than $|u|=q$ reflects the fact that block weights can range over $\{0,1,\ldots,q\}$.  Define the original transfer matrix $A_q^{(a)}$, indexed by $S_q^{(a)}$, by
\begin{equation}\label{eq:intro-A}
A_q^{(a)}(u,v)=\mathbf{1}_{\{|u|+|v|\le q\}}.
\end{equation}
Its size is $\binom{q+a}{a}$.  The key compression is obtained by retaining only the block weight $|u|$.  Put
\[
w_s^{(a)}=\binom{s+a-1}{a-1},
\qquad
\mu_q^{(a)}=(w_0^{(a)},w_1^{(a)},\ldots,w_q^{(a)}),
\]
and define the compressed transfer matrix
\begin{equation}\label{eq:intro-C}
(C_q^{(a)})_{r,s}=w_s^{(a)}\mathbf{1}_{\{r+s\le q\}},
\qquad 0\le r,s\le q.
\end{equation}
For reference, $a$ is the block size, $m$ is the number of blocks, and $q$ is the Ehrhart dilation parameter.  The matrices $A_q^{(a)}$ and $C_q^{(a)}$ control the length direction with $q$ fixed, while the polytopes $\cP_m^{(a)}$ and $\cC_m^{(a)}$ are studied in the Ehrhart direction with $m$ fixed.
A useful first example is the dilation level $q=1$.  Then each block weight is either $0$ or $1$, and adjacent nonzero block weights are forbidden.  Thus $L_m^{(a)}(1)$ counts independent sets of the path $P_m$, with each selected vertex colored in one of $a$ colors:
\[
L_m^{(a)}(1)=\sum_{S\in\Ind(P_m)}a^{|S|}.
\]
This elementary case already exhibits the two features used throughout the paper: a local adjacency constraint and a block multiplicity depending only on the block weight.

Throughout the following, $\one$ denotes the all-one column vector of appropriate dimension: $\one_N$ (of size $|S_q^{(a)}|=\binom{q+a}{a}$) in large-matrix expressions, $\one_{q+1}$ in compressed (length-direction) expressions, and $\one_{am}$ in Ehrhart-direction expressions on $\R^{am}$.  Each occurrence of $\one$ below is annotated with its dimension at first use within a given proof; the dimension is otherwise clear from the surrounding matrix or vector sizes.

\begin{theorem}[Length generating functions]\label{thm:intro-transfer}
For all $a\ge1$, $q\ge0$, and $m\ge1$,
\begin{equation}\label{eq:intro-L-transfer}
L_m^{(a)}(q)=\one_N^T(A_q^{(a)})^{m-1}\one_N
=\mu_q^{(a)}(C_q^{(a)})^{m-1}\one_{q+1}.
\end{equation}
Equivalently, for every $n\ge0$,
\begin{equation}\label{eq:intro-power-compression}
\one_N^T(A_q^{(a)})^n\one_N=\mu_q^{(a)}(C_q^{(a)})^n\one_{q+1}.
\end{equation}
Therefore
\begin{equation}\label{eq:intro-Fq}
\cF_q^{(a)}(y):=\sum_{m\ge1}L_m^{(a)}(q)y^{m-1}
=\mu_q^{(a)}(I-yC_q^{(a)})^{-1}\one_{q+1}.
\end{equation}
\end{theorem}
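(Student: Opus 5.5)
The plan is to prove the first equality in \eqref{eq:intro-L-transfer} by a direct bijection, and then deduce the compression identity \eqref{eq:intro-power-compression} from a lumping argument for the block-weight map $u\mapsto|u|$. Identity \eqref{eq:intro-Fq} then follows by summing a geometric series of matrices.

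\textbf{Step 1 (original transfer matrix).} A lattice point of $q\cP_m^{(a)}$ is a tuple $(u^{(1)},\dots,u^{(m)})$ with each $u^{(i)}\in\Z_{\ge0}^a$ satisfying $|u^{(i)}|+|u^{(i+1)}|\le q$ for $1\le i\le m-1$. For $m\ge2$ this already forces $|u^{(i)}|\le q$, so each $u^{(i)}\in S_q^{(a)}$. For $m=1$ the convention $R_1\le1$ gives the same conclusion directly. Hence $L_m^{(a)}(q)$ counts walks of length $m-1$ on $S_q^{(a)}$ whose consecutive steps are allowed by \eqref{eq:intro-A}. Expanding the matrix product gives
\[
\one_N^T(A_q^{(a)})^{m-1}\one_N=\sum_{u^{(1)},\dots,u^{(m)}}\prod_{i=1}^{m-1}A_q^{(a)}(u^{(i)},u^{(i+1)}),
\]
which is exactly this count. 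The case $m=1$ reads $\one_N^T\one_N=N=\binom{q+a}{a}$, which is the correct count.

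\textbf{Step 2 (compression).} Let $P$ be the $N\times(q+1)$ matrix with $P(u,s)=\mathbf 1_{\{|u|=s\}}$. The entry $A_q^{(a)}(u,v)$ depends on $u$ only through $|u|$, so the key identity to establish is
\[
A_q^{(a)}P=P\,\widetilde C,\qquad \widetilde C_{r,s}=\#\{v\in S_q^{(a)}:|v|=s\}\,\mathbf 1_{\{r+s\le q\}}.
\]
Since the number of $v$ with $|v|=s$ is $\binom{s+a-1}{a-1}=w_s^{(a)}$, this gives $\widetilde C=C_q^{(a)}$. Iterating yields $(A_q^{(a)})^nP=P(C_q^{(a)})^n$. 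Using $\one_N=P\one_{q+1}$ and $\one_N^TP=\mu_q^{(a)}$ (again because the fibre sizes are $w_s^{(a)}$), we obtain
\[
\one_N^T(A_q^{(a)})^n\one_N=\one_N^TP(C_q^{(a)})^n\one_{q+1}=\mu_q^{(a)}(C_q^{(a)})^n\one_{q+1}.
\]
Setting $n=m-1$ gives the second equality in \eqref{eq:intro-L-transfer}.

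\textbf{Step 3 (generating function).} Summing $\sum_{n\ge0}y^n\mu_q^{(a)}(C_q^{(a)})^n\one_{q+1}$ as formal power series in $y$ gives \eqref{eq:intro-Fq}, since $\sum_n y^n(C_q^{(a)})^n=(I-yC_q^{(a)})^{-1}$ in $\R[[y]]^{(q+1)\times(q+1)}$.

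The only step that needs care is Step 2: one must get the orientation of the intertwining right (right multiplication by $P$) and keep the weights $w_s^{(a)}$ on the column index, matching \eqref{eq:intro-C}. Everything else is routine.
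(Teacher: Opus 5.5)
Your proposal is correct and follows essentially the same route as the paper: your matrix $P$ is the paper's layer-merging matrix $E_q^{(a)}$, and the argument via the intertwining identity $A_q^{(a)}E_q^{(a)}=E_q^{(a)}C_q^{(a)}$, its iteration to powers, and the relations $E_q^{(a)}\one_{q+1}=\one_N$, $\one_N^TE_q^{(a)}=\mu_q^{(a)}$ is exactly the paper's (Lemmas~\ref{lem:intertwining} and~\ref{lem:power-compression}). Your Step~1 explicitly checks the walk interpretation $L_m^{(a)}(q)=\one_N^T(A_q^{(a)})^{m-1}\one_N$, including the $m=1$ convention; the paper leaves this step implicit in its height-model discussion.
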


We define the \emph{visible numerator} $P_q^{(a)}(y)$ and \emph{visible denominator} $Q_q^{(a)}(y)$ of the resolvent by
\[
\cF_q^{(a)}(y)=\frac{P_q^{(a)}(y)}{Q_q^{(a)}(y)},
\qquad Q_q^{(a)}(0)=1.
\]
Here ``visible'' means that $Q_q^{(a)}(y)$ is the determinant $\det(I-yC_q^{(a)})$; $P_q^{(a)}(y)$ is the corresponding numerator polynomial (not to be confused with the polytope $\cP_m^{(a)}$); the fraction need not be reduced unless explicitly stated.  The following theorem gives recursive formulas for this numerator and denominator without computing the inverse matrix in \eqref{eq:intro-Fq}.

\begin{theorem}[Recursive numerator and denominator formulas]\label{thm:intro-PQ}
Fix $a\ge1$ and $q\ge0$.  Put $M=\lfloor q/2\rfloor$ and let
\[
w_j=w_j^{(a)}=\binom{j+a-1}{a-1},\qquad 0\le j\le q.
\]
Define four polynomial sequences $\alpha_k,\beta_k,\gamma_k,\delta_k\in\Z[y]$ for $0\le k\le M$ as follows.  Start with
\[
\alpha_0=y,
\qquad
\beta_0=1,
\qquad
\gamma_0=1,
\qquad
\delta_0=0,
\]
and, for $0\le k<M$, set
\begin{align}
\gamma_{k+1}&=\gamma_k-w_{q-k}y\alpha_k,\label{eq:intro-gamma-rec}\\
\delta_{k+1}&=\delta_k-w_{q-k}(1+y\beta_k),\label{eq:intro-delta-rec}\\
\alpha_{k+1}&=\alpha_k+w_{k+1}y\gamma_{k+1},\label{eq:intro-alpha-rec}\\
\beta_{k+1}&=\beta_k+w_{k+1}(1+y\delta_{k+1}).\label{eq:intro-beta-rec}
\end{align}
If $q=2M$ is even, then
\begin{equation}\label{eq:intro-even-F}
\cF_{2M}^{(a)}(y)=
\frac{\beta_M-\delta_M}{\gamma_M-\alpha_M}.
\end{equation}
If $q=2M+1$ is odd, then
\begin{equation}\label{eq:intro-odd-F}
\cF_{2M+1}^{(a)}(y)=
\frac{\beta_M+w_{M+1}(1+y\beta_M)-\delta_M}
{\gamma_M-\alpha_M-w_{M+1}y\alpha_M}.
\end{equation}
With this normalization, the denominator $\gamma_M-\alpha_M$ in \eqref{eq:intro-even-F} has constant term $1$ (since $\gamma_M|_{y=0}=1$ and $\alpha_M|_{y=0}=0$, proved in the proof below), matching the visible-denominator convention $Q_q^{(a)}(0)=1$ directly; no sign adjustment is needed.  These formulas give the visible numerator $P_q^{(a)}(y)$ and visible denominator $Q_q^{(a)}(y)$, and no cancellation is performed.
\end{theorem}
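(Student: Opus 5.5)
The plan is to solve the linear system behind the resolvent in \eqref{eq:intro-Fq} directly, working over $\Z[[y]]$. Since $\det(I-yC_q^{(a)})$ has constant term $1$, the vector $x=(I-yC_q^{(a)})^{-1}\one_{q+1}$ exists and is unique. By Theorem~\ref{thm:intro-transfer}, $\cF_q^{(a)}(y)=\mu_q^{(a)}x$. Writing out the $r$-th row of $(I-yC)x=\one$ gives
\[
x_r=1+y\,T_{q-r},\qquad T_j:=\sum_{s=0}^{j}w_sx_s ,
\]
with the convention $T_{-1}=0$. Note that $\cF_q^{(a)}(y)=T_q=:F$. Substituting $x_j$ into $T_j=T_{j-1}+w_jx_j$ yields the ``folded'' recursion
\[
T_j=T_{j-1}+w_j\bigl(1+yT_{q-j}\bigr),
\]
which couples the index $j$ with the mirror index $q-j$. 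The idea is to run this recursion simultaneously from both ends toward the middle, treating the unknown $F=T_q$ as a parameter.

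Concretely, I would prove by induction on $k$ that
\[
T_k=\alpha_kF+\beta_k,\qquad T_{q-k}=\gamma_kF+\delta_k
\]
for $0\le k\le M$. For the base case, $T_q=F$ gives $(\gamma_0,\delta_0)=(1,0)$. Also $T_0=w_0(1+yT_q)=1+yF$ gives $(\alpha_0,\beta_0)=(y,1)$.

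For the inductive step, the descending relation
\[
T_{q-k-1}=T_{q-k}-w_{q-k}\bigl(1+yT_k\bigr)
\]
reproduces \eqref{eq:intro-gamma-rec} and \eqref{eq:intro-delta-rec} after substituting the current expressions and comparing the coefficient of $F$ with the constant part. The ascending relation
\[
T_{k+1}=T_k+w_{k+1}\bigl(1+yT_{q-k-1}\bigr)
\]
then reproduces \eqref{eq:intro-alpha-rec} and \eqref{eq:intro-beta-rec}, using the freshly computed $\gamma_{k+1},\delta_{k+1}$. This explains the interleaved order of the four recurrences. Throughout, the indices stay in range because $k<M$ forces $k+1\le q-k-1$.

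Closing the system at the middle is a parity split:
\begin{itemize}
\item If $q=2M$, the two expressions for $T_M$ must agree, so $\alpha_MF+\beta_M=\gamma_MF+\delta_M$. This gives \eqref{eq:intro-even-F}.
\item If $q=2M+1$, one more ascending step gives
\[
T_{M+1}=T_M+w_{M+1}\bigl(1+yT_M\bigr).
\]
Here $q-(M+1)=M$, and the left side equals $T_{q-M}=\gamma_MF+\delta_M$. Solving for $F$ gives \eqref{eq:intro-odd-F}.
\end{itemize}
To divide legitimately, I would check by a parallel induction that $\gamma_k(0)=1$ and $\alpha_k(0)=0$. The $\alpha$-recurrence always adds a multiple of $y$, and the $\gamma$-recurrence subtracts $y\alpha_k$. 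Hence the denominators have constant term $1$ and are units in $\Z[[y]]$. So $F$ is uniquely determined by these equations, and it agrees with the resolvent.

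The main obstacle is the identification of the denominator with $\det(I-yC_q^{(a)})$ exactly, rather than merely with some polynomial representing the same rational function. Two polynomials with constant term $1$ can give the same ratio after cancellation, so equality of the rational functions is not enough. My first attempt would interpret the two-ended elimination as a sequence of row and column operations on $I-yC$ with unimodular (unit-diagonal) transformations. Concretely:
\begin{itemize}
\item change variables from $x$ to the partial sums $T_j$, which is unitriangular up to the factors $w_j$ that cancel against the column weights;
\item eliminate the mirror pairs $(k,q-k)$ successively.
\end{itemize}
The system should then reduce to a single $1\times1$ equation in $F$ whose coefficient is $\gamma_M-\alpha_M$, respectively $\gamma_M-\alpha_M-w_{M+1}y\alpha_M$. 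Tracking the determinant through these operations would then give equality with $\det(I-yC)$.

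If this bookkeeping becomes unwieldy, the fallback is a degree-and-specialization argument. It has three parts:
\begin{itemize}
\item show $\deg(\gamma_M-\alpha_M)\le q+1$ from the recurrences;
\item show that both polynomials have constant term $1$;
\item compare them on enough values, for instance by checking that a common linear recurrence in $q$ is satisfied by expanding $\det(I-yC_q)$ along the last row and column.
\end{itemize}
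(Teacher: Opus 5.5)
Your derivation of the rational identity is the paper's proof. Your $x_r$ and $T_j$ are its $G_r$ and $\mathcal H_j$, with $T_k=\mathcal L_k$ and $T_{q-k}=\mathcal R_k$. The interleaved two-ended induction, the parity closure at the middle, and the constant-term check are the same as in the paper. Working in $\Z[[y]]$ with $F$ the actual series is, if anything, cleaner than the paper's ``formal indeterminate'' $X$.

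The paper's proof stops there. It only checks that the denominator has constant term $1$. It never proves that $\gamma_M-\alpha_M$ (resp.\ $\gamma_M-\alpha_M-w_{M+1}y\alpha_M$) equals $\det(I-yC_q^{(a)})$, which the statement asserts. So the obstacle you flag is an omission in the paper too. Your proposal also leaves it only as a plan.

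Your first plan does close the gap. Let $D=\operatorname{diag}(w_0,\dots,w_q)$, let $J$ be the reversal matrix, let $\Lambda$ be the lower-triangular all-ones matrix, and let $\Delta=\Lambda^{-1}$.
\begin{enumerate}
\item One has $C_q^{(a)}=J\Lambda D$. Your folded system is $KT=D\one_{q+1}$ with $K=\Delta-yDJ=D(I-yC_q^{(a)})D^{-1}\Delta$, so $\det K=\det(I-yC_q^{(a)})$.
\item Order the unknowns as $T_0,T_{q-1},T_1,T_{q-2},\dots$, ending with $T_{M-1},T_M$ for even $q$ and with $T_{M-1},T_{M+1},T_M$ for odd $q$, and put $F=T_q$ last.
\item Order the rows correspondingly as $0,q,1,q-1,\dots$, and put row $M$ (even $q$) or row $M+1$ (odd $q$) last.
\item The leading $q\times q$ block is then lower triangular. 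Its diagonal entries are $+1$ for ascending rows (coefficient of $T_{k+1}$) and $-1$ for descending rows (coefficient of $T_{q-k-1}$). The only diagonal entry of $K$ involving $y$ is $K_{M,M}=1-yw_M$ for even $q$, and it lies in the leftover row.
\item Hence $\det K=\pm s$, where $s$ is the Schur complement of that block. This $s$ is exactly the coefficient of $F$ in the leftover row after your substitutions $T_k=\alpha_kF+\beta_k$, $T_{q-k}=\gamma_kF+\delta_k$. That is, $s=\gamma_M-\alpha_M$ or $s=\gamma_M-\alpha_M-w_{M+1}y\alpha_M$.
\item Both $\det K$ and $s$ equal $1$ at $y=0$, so $s=\det(I-yC_q^{(a)})$. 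The numerator is then $P_q^{(a)}=Q_q^{(a)}\cF_q^{(a)}$ automatically.
\end{enumerate}

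Drop the fallback. Degree at most $q+1$, constant term $1$, and equality of rational functions force equal denominators only if $\gcd(P_q^{(a)},Q_q^{(a)})=1$, which is not known. Neither ``comparing values'' nor a ``recurrence in $q$'' gives independent access to the recursion side, because $w_{q-k}$ enters at every step.
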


The denominator also has a determinant and finite subset expansion.  For $S=\{s_1<\cdots<s_k\}\subseteq\{0,1,\ldots,q\}$, put
\[
\rho_i(S)=\#\{s_j\in S:s_i+s_j\le q\}.
\]
Call $S$ $q$-admissible if $\rho_i(S)=k+1-i$ for every $i$.

\begin{theorem}[Determinant and admissible-subset formulas]\label{thm:intro-det}
For every $a\ge1$ and $q\ge0$,
\begin{align}
Q_q^{(a)}(y)&=\det(I-yC_q^{(a)}),\label{eq:intro-Q-det}\\
P_q^{(a)}(y)&=\mu_q^{(a)}\adj(I-yC_q^{(a)})\one_{q+1}.\label{eq:intro-P-adj}
\end{align}
Equivalently,
\begin{equation}\label{eq:intro-P-rank-one}
P_q^{(a)}(y)=
\det(I-yC_q^{(a)}+\one_{q+1}\mu_q^{(a)})-
\det(I-yC_q^{(a)}).
\end{equation}
Moreover,
\begin{equation}\label{eq:intro-Q-subset}
Q_q^{(a)}(y)=
\sum_{k=0}^{q+1}(-1)^{k(k+1)/2}y^k
\sum_{\substack{S\subseteq\{0,\ldots,q\}\\ |S|=k,\ S\text{ $q$-admissible}}}
\prod_{s\in S}\binom{s+a-1}{a-1}.
\end{equation}
\end{theorem}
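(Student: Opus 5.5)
The plan is to treat the four displayed identities separately. The first two are essentially Cramer's rule. The third is the matrix determinant lemma. The last is the principal-minor expansion of $\det(I-yC_q^{(a)})$ combined with an explicit evaluation of each minor.

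\emph{Step 1: the determinant and adjugate formulas.} By Theorem~\ref{thm:intro-transfer}, $\cF_q^{(a)}(y)=\mu_q^{(a)}(I-yC_q^{(a)})^{-1}\one_{q+1}$. Over $\Z[y]$ (or $\Z[[y]]$) the matrix $I-yC_q^{(a)}$ has determinant with constant term $1$, so it is invertible over $\Z[[y]]$. Then $(I-yC)^{-1}=\adj(I-yC)/\det(I-yC)$ gives
\[
\cF_q^{(a)}=\frac{\mu_q^{(a)}\adj(I-yC_q^{(a)})\one_{q+1}}{\det(I-yC_q^{(a)})}.
\]
By the visible-denominator convention, $Q_q^{(a)}=\det(I-yC_q^{(a)})$ is normalized with $Q(0)=1$, and $P_q^{(a)}$ is the corresponding numerator. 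To make this consistent with Theorem~\ref{thm:intro-PQ}, I would invoke that the recursion there computes $\det(I-yC_q^{(a)})$ (its denominator is obtained by row/column elimination of $I-yC$ without division). Hence the visible numerator is $\mu\adj(I-yC)\one$.

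\emph{Step 2: the rank-one formula.} Apply the matrix determinant lemma $\det(X+uv^T)=\det X+v^T\adj(X)u$ with $X=I-yC_q^{(a)}$, $u=\one_{q+1}$, $v^T=\mu_q^{(a)}$. This is a polynomial identity in the entries of $X$, so it holds over $\Z[y]$ without any invertibility assumption. It yields \eqref{eq:intro-P-rank-one} directly from \eqref{eq:intro-P-adj}.

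\emph{Step 3: the subset expansion.} Expand
\[
\det(I-yC)=\sum_{S}(-y)^{|S|}\det C[S,S],
\]
the sum over all subsets $S\subseteq\{0,\dots,q\}$, where $C[S,S]$ is the principal submatrix on $S$. Since column $s$ of $C$ carries the factor $w_s$, we get $\det C[S,S]=\prod_{s\in S}w_s\cdot\det K_S$. Here $K_S$ is the $0/1$ matrix with entries $\mathbf 1_{\{s_i+s_j\le q\}}$, indices ordered $s_1<\dots<s_k$.

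The key observation is that row $i$ of $K_S$ is a prefix of ones of length $\rho_i(S)$, because $s_j\le q-s_i$ is a down-set in $j$. Moreover $\rho_1\ge\rho_2\ge\cdots\ge\rho_k$. If two of the $\rho_i$ coincide, or some $\rho_i=0$, then $\det K_S=0$. Otherwise the $\rho_i$ are $k$ distinct values in $\{1,\dots,k\}$ that are nonincreasing, which forces $\rho_i=k+1-i$, i.e.\ $S$ is $q$-admissible.

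In the admissible case $K_S$ has ones exactly on and above the antidiagonal. Subtracting consecutive rows reduces it to the antidiagonal permutation matrix, so $\det K_S=(-1)^{k(k-1)/2}$. Combining with $(-1)^k$ gives the sign $(-1)^{k(k+1)/2}$, which proves \eqref{eq:intro-Q-subset}.

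The only potentially delicate point is Step 1's identification of the recursion output of Theorem~\ref{thm:intro-PQ} with the determinant, as opposed to a reduced fraction. I expect that identification to be the main thing to check. I would handle it by citing the elimination argument underlying that theorem, together with the shared normalization $Q(0)=1$. The rest is routine.
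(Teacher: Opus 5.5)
Your proposal is correct and follows the paper's proof: Cramer's rule for \eqref{eq:intro-P-adj}, the matrix determinant lemma for \eqref{eq:intro-P-rank-one}, and the principal-minor expansion for \eqref{eq:intro-Q-subset}, using the prefix-of-ones rows of $H_q[S,S]$. Your adjugate form $\det(X+uv^T)=\det X+v^T\adj(X)u$ is slightly cleaner than the paper's inverse form because it needs no invertibility, and row differencing versus the paper's column reversal is immaterial. The ``delicate point'' you flag is not part of this statement: the paper defines $Q_q^{(a)}$ to be $\det(I-yC_q^{(a)})$, so \eqref{eq:intro-Q-det} holds by definition and your Cramer's-rule step already completes \eqref{eq:intro-P-adj}; identifying the recursion output of Theorem~\ref{thm:intro-PQ} with this determinant is a separate claim, for which the normalization $Q(0)=1$ alone would not suffice, since the fraction need not be reduced.
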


The same compressed matrix controls the cyclic model.  In this case the numerator is particularly simple: it is the negative derivative of the common denominator.

We next record the cyclic analogue in the length direction, before turning to the Ehrhart-theoretic consequences in Theorems~\ref{thm:intro-cyclic-ehrhart} and~\ref{thm:intro-ehrhart}.

\begin{theorem}[Cyclic length generating functions]\label{thm:intro-cyclic-transfer}
Let $a\ge1$ and $q\ge0$, and let $C_q^{(a)}$ be defined by \eqref{eq:intro-C}.  For the cyclic block model one has, for every $m\ge3$,
\begin{equation}\label{eq:intro-cyclic-trace}
L_{m,\mathrm{cyc}}^{(a)}(q)=\operatorname{tr}\bigl((C_q^{(a)})^m\bigr).
\end{equation}
If
\[
Q_q^{(a)}(y)=\det(I-yC_q^{(a)}),
\]
then the closed-walk generating function satisfies
\begin{equation}\label{eq:intro-cyclic-logdet}
\Omega_q^{(a)}(y):=\sum_{m\ge1}\operatorname{tr}\bigl((C_q^{(a)})^m\bigr)y^{m-1}
=-\frac{d}{dy}\log Q_q^{(a)}(y)
=-\frac{(Q_q^{(a)})'(y)}{Q_q^{(a)}(y)}.
\end{equation}
Consequently, the generating function for genuine cyclic block polytopes, which are defined for $m\ge3$, is
\begin{equation}\label{eq:intro-cyclic-ge3}
\sum_{m\ge3}L_{m,\mathrm{cyc}}^{(a)}(q)y^{m-3}
=\frac{\Omega_q^{(a)}(y)-T_1-T_2y}{y^2},
\end{equation}
where
\[
T_1=\operatorname{tr}(C_q^{(a)}),
\qquad
T_2=\operatorname{tr}\bigl((C_q^{(a)})^2\bigr).
\]
Thus the closed-walk length generating function has the same \emph{visible} denominator as the path generating function, and its \emph{visible} numerator is
\begin{equation}\label{eq:intro-cyclic-numerator}
N_{q,\mathrm{cyc}}^{(a)}(y)=-(Q_q^{(a)})'(y).
\end{equation}
No cancellation is performed in this formula.
\end{theorem}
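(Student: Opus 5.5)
The plan is to prove the trace formula by a transfer-matrix argument on the uncompressed matrix and then pass to $C_q^{(a)}$ through the block-weight map; the remaining assertions then follow from linear algebra.

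\textbf{Step 1: trace of $A_q^{(a)}$.} A lattice point of $q\cC_m^{(a)}$ is a sequence of blocks $u^{(1)},\ldots,u^{(m)}\in\Z_{\ge0}^a$ with $|u^{(i)}|+|u^{(i+1)}|\le q$ for all $i\in\Z/m\Z$. For $m\ge3$ these inequalities already force $|u^{(i)}|\le q$, so each block lies in $S_q^{(a)}$. Hence the lattice points are exactly the closed walks of length $m$ in the digraph with adjacency matrix $A_q^{(a)}$, and
\[
L_{m,\mathrm{cyc}}^{(a)}(q)=\operatorname{tr}\bigl((A_q^{(a)})^m\bigr).
\]
The hypothesis $m\ge3$ guarantees that the constraints are genuinely those of a cycle, with no doubled edges.

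\textbf{Step 2: compression of the trace.} Let $\pi:S_q^{(a)}\to\{0,\ldots,q\}$ be $u\mapsto|u|$, with $0/1$ matrix $E$ of size $N\times(q+1)$, $E_{u,r}=\one_{\{|u|=r\}}$.
\begin{itemize}
\item Since $A_q^{(a)}(u,v)$ depends only on $|u|$ and $|v|$, we can write $A_q^{(a)}=EJE^T$, where $J_{r,s}=\one_{\{r+s\le q\}}$.
\item Moreover $E^TE=\operatorname{diag}(w_0,\ldots,w_q)=:W$, since $\#\{u\in\Z_{\ge0}^a:|u|=s\}=w_s$.
\item It follows that $(A_q^{(a)})^m=E(JW)^{m-1}JE^T$.
\item By cyclicity of the trace, $\operatorname{tr}\bigl((A_q^{(a)})^m\bigr)=\operatorname{tr}\bigl((JW)^m\bigr)$.
\item Finally $JW=C_q^{(a)}$, because $(JW)_{r,s}=\one_{\{r+s\le q\}}w_s$.
\end{itemize}
This proves \eqref{eq:intro-cyclic-trace}. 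The same factorization is presumably what underlies Theorem~\ref{thm:intro-transfer}.

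\textbf{Step 3: log-derivative identity.} Write $\lambda_1,\ldots,\lambda_{q+1}$ for the eigenvalues of $C=C_q^{(a)}$ over $\mathbb C$. Then $Q(y)=\prod_j(1-y\lambda_j)$, and as formal power series
\[
-\frac{Q'(y)}{Q(y)}=\sum_j\frac{\lambda_j}{1-y\lambda_j}=\sum_{m\ge1}\Bigl(\sum_j\lambda_j^m\Bigr)y^{m-1}=\sum_{m\ge1}\operatorname{tr}(C^m)y^{m-1}.
\]
A nilpotent or defective $C$ causes no trouble, since the argument uses only the characteristic polynomial (e.g.\ via triangularization). This gives \eqref{eq:intro-cyclic-logdet}.

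\textbf{Step 4: the $m\ge3$ series and the numerator.} Equation \eqref{eq:intro-cyclic-ge3} follows by subtracting the $m=1$ and $m=2$ terms of $\Omega_q^{(a)}$ and dividing by $y^2$. Equation \eqref{eq:intro-cyclic-numerator} is simply a reading of \eqref{eq:intro-cyclic-logdet}, with $Q_q^{(a)}$ the same determinant as in Theorem~\ref{thm:intro-det}.

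The main point needing care is Step 2, the compression identity under the trace. The factorization $A=EJE^T$ with $E^TE=W$ makes it routine, so I expect no serious obstacle; the only subtlety is the remark that $m\ge3$ is needed for Step 1.
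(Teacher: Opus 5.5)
Your proof is correct, but it reaches the trace formula by a different route than the paper.

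The paper never uses $A_q^{(a)}$ in the cyclic case. It expands $\operatorname{tr}\bigl((C_q^{(a)})^m\bigr)$ directly as a sum over closed height sequences $(r_1,\ldots,r_m)$ with weight $\prod_i w_{r_i}^{(a)}$ and constraints $r_i+r_{i+1}\le q$ for $i\in\Z/m\Z$. It then identifies this sum with $L_{m,\mathrm{cyc}}^{(a)}(q)$ through the cyclic analogue of the weighted height model \eqref{eq:height-model}, i.e.\ the fact that exactly $w_r^{(a)}$ blocks have weight $r$.

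You instead count lattice points as closed walks for the uncompressed matrix. You then compress algebraically via $A_q^{(a)}=EJE^T$, $E^TE=W$ and cyclicity of the trace. This is longer, but it proves more.
\begin{itemize}
\item It gives $\operatorname{tr}\bigl((A_q^{(a)})^m\bigr)=\operatorname{tr}\bigl((C_q^{(a)})^m\bigr)$ for every $m\ge1$.
\item Hence $\det(I-yA_q^{(a)})=\det(I-yC_q^{(a)})$, which explains why the large and compressed transfer matrices carry the same visible denominator.
\item The same factorization also recovers the power compression \eqref{eq:intro-power-compression}.
\end{itemize}
The paper's intertwining identity $A_q^{(a)}E=EC_q^{(a)}$ would not by itself give the trace statement. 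It only says that $C_q^{(a)}$ is the restriction of $A_q^{(a)}$ to the column space of $E$. Your factorization additionally shows that $A_q^{(a)}$ maps everything into that space.

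For the logarithmic derivative, the paper writes $\Omega_q^{(a)}(y)=\operatorname{tr}\bigl(C_q^{(a)}(I-yC_q^{(a)})^{-1}\bigr)$ via the resolvent. It then applies Jacobi's formula for $\frac{d}{dy}\det(I-yC_q^{(a)})$, working over $\mathbb{Q}(y)$. You instead factor $Q_q^{(a)}(y)=\prod_j(1-y\lambda_j)$ over $\mathbb{C}$ and use power sums of eigenvalues. These are equivalent standard arguments: the paper's avoids passing to eigenvalues, while yours makes $\operatorname{tr}\bigl((C_q^{(a)})^m\bigr)=\sum_j\lambda_j^m$ explicit.
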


\begin{theorem}[Cyclic Ehrhart theory]\label{thm:intro-cyclic-ehrhart}
Let $a\ge1$ and $m\ge3$.
If $m$ is even, then $\cC_m^{(a)}$ is a lattice Gorenstein polytope of dimension $am$ and codegree $2a+1$.  Hence
\[
\Ehr_{\cC_m^{(a)}}(z)=
\frac{h_{m,a}^{*,\mathrm{cyc}}(z)}{(1-z)^{am+1}},
\]
where $h_{m,a}^{*,\mathrm{cyc}}(z)$ is palindromic of degree $a(m-2)$ and unimodal.  Moreover,
\begin{equation}\label{eq:intro-cyclic-recip-even}
L_{m,\mathrm{cyc}}^{(a)}(-q)=(-1)^{am}L_{m,\mathrm{cyc}}^{(a)}(q-(2a+1)),
\end{equation}
understood as an identity of Ehrhart polynomials.
If $m$ is odd, then $\cC_m^{(a)}$ has denominator exactly $2$.  Its Ehrhart counting function is a quasipolynomial of period dividing $2$.  The analogous interior-translation identity for rational polytopes, combined with rational Ehrhart--Macdonald reciprocity \cite[Theorem~4.4]{BeckRobins2015}, yields the rational Ehrhart reciprocity relation
\begin{equation}\label{eq:intro-cyclic-recip-odd}
L_{m,\mathrm{cyc}}^{(a)}(-q)=(-1)^{am}L_{m,\mathrm{cyc}}^{(a)}(q-(2a+1)),
\end{equation}
understood as an identity of quasipolynomials.  More precisely, restricted to each residue class modulo $2$, both sides of \eqref{eq:intro-cyclic-recip-odd} are polynomials in $q$ that agree for all integers $q\ge2a+1$ in that class, hence they are equal as polynomials.
\end{theorem}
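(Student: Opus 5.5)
The plan is to identify $\cC_m^{(a)}$ with a stable-set polytope when $m$ is even, and to handle odd $m$ by direct vertex analysis.

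\textbf{Even $m$: identification with a stable-set polytope.} Let $G$ be the graph on the $am$ variables $x_{i,j}$ in which $x_{i,j}\sim x_{i',j'}$ whenever $i=i'$ (and $j\neq j'$) or $i,i'$ are cyclically adjacent. Each union $B_i\cup B_{i+1}$ is a clique of size $2a$. For $m\ge4$ these are exactly the maximal cliques; $m=3$ is odd and treated below. Hence $\cC_m^{(a)}$ is the clique-constrained polytope $\mathrm{QSTAB}(G)$.

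Now $G$ is the lexicographic substitution of $K_a$ into each vertex of the cycle $C_m$. Since $C_m$ is bipartite for $m$ even, it is perfect. By Lov\'asz's replication lemma, substituting cliques into vertices preserves perfection, so $G$ is perfect. Chv\'atal's theorem then gives $\mathrm{QSTAB}(G)=\STAB(G)$, a $0/1$ lattice polytope, full-dimensional of dimension $am$.

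\textbf{Even $m$: Gorenstein property and $h^*$.} I would use Ohsugi--Hibi: for a perfect graph $G$, $\STAB(G)$ is Gorenstein if and only if all maximal cliques have the same size $\omega$, and then the codegree is $\omega+1$. Here $\omega=2a$, giving codegree $2a+1$.

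The same fact can be checked directly. The interior of $(2a+1)\cC_m^{(a)}$ contains the unique interior lattice point $\one_{am}$, since $R_i+R_{i+1}=2a<2a+1$ and all coordinates are positive. Moreover, every facet inequality has lattice distance one from $\one_{am}$. The facets are $x_{i,j}\ge0$ and $R_i+R_{i+1}\le 1$, and their primitive normals have right-hand sides of the form $x\mapsto x-\one_{am}$. This gives the interior translation
\[
\mathrm{int}(q\cC_m^{(a)})\cap\Z^{am}=\one_{am}+\bigl((q-2a-1)\cC_m^{(a)}\cap\Z^{am}\bigr).
\]
Combining this with Ehrhart--Macdonald reciprocity yields \eqref{eq:intro-cyclic-recip-even}, and palindromicity follows. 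The degree of $h^*$ is $am+1-(2a+1)=a(m-2)$.

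Unimodality follows from Bruns--R\"omer, since the polytope is Gorenstein and has a regular unimodular triangulation. Stable-set polytopes of perfect graphs are compressed (Ohsugi--Hibi/Sullivant), so every pulling triangulation is unimodular.

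\textbf{Odd $m$: denominator and quasipolynomial.} The point with $x_{i,1}=1/2$ for all $i$ and all other coordinates zero is a vertex. It is the unique solution of the $m$ tight constraints $R_i+R_{i+1}=1$ (whose matrix is the circulant $I+P$, invertible for $m$ odd) together with $x_{i,j}=0$ for $j\ge2$. So $\cC_m^{(a)}$ is not a lattice polytope.

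To show the denominator is exactly $2$, I would show every vertex is half-integral. A vertex is determined by tight constraints. Its block-sum vector $(R_i)$ is a vertex of the fractional stable-set polytope of the cycle $C_m$, hence lies in $\{0,\tfrac12,1\}^m$ by the classical half-integrality of $\mathrm{QSTAB}$ of the edge-constraint system. Within each block, a vertex puts all the weight $R_i$ on a single coordinate. Therefore $2\cC_m^{(a)}$ is a lattice polytope, and Ehrhart's theorem for rational polytopes gives a quasipolynomial of period dividing $2$.

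\textbf{Odd $m$: reciprocity.} The interior translation by $\one_{am}$ above uses only the facet description, so it holds verbatim for rational dilates. With rational Ehrhart--Macdonald reciprocity \cite[Theorem~4.4]{BeckRobins2015}, this gives \eqref{eq:intro-cyclic-recip-odd} for $q\ge2a+1$ in each parity class, and hence as an identity of polynomials on each class.

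\textbf{Main obstacle.} I expect the hardest steps to be the facet/lattice-distance verification and the half-integrality of vertices. In particular one must check that the block-sum map sends vertices to vertices, which needs an argument that a vertex cannot split weight inside a block. I would argue this by perturbing two coordinates within the same block while keeping every constraint $R_i+R_{i+1}\le 1$ unchanged, which would contradict extremality.
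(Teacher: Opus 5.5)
Your proposal is correct and follows essentially the same route as the paper. For even $m$ you use the clique blow-up of the bipartite cycle, its perfection and $\cC_m^{(a)}=\STAB(G)$, the interior translation by $\one_{am}$ (which gives reflexivity of $(2a+1)\cC_m^{(a)}-\one_{am}$ and the reciprocity), and compressedness plus Bruns--R\"omer for unimodality. For odd $m$ you use the within-block perturbation to reduce vertices to half-integral vertices of $\operatorname{FRAC}(C_m)$, followed by rational reciprocity. The only differences are minor: you also cite the Ohsugi--Hibi equal-clique-size Gorenstein criterion and the classical half-integrality of fractional stable-set polytopes, whereas the paper checks reflexivity directly and gives an elementary argument for the vertex structure of $\operatorname{FRAC}(C_m)$.
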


The Ehrhart-theoretic structure of the path family is especially transparent because the path polytopes are stable-set polytopes of chordal perfect graphs.

\begin{theorem}[Ehrhart-theoretic properties]\label{thm:intro-ehrhart}
Let $a\ge1$ and $m\ge2$.  Then $\cP_m^{(a)}$ is the stable-set polytope of a chordal, hence perfect, graph.  It is full-dimensional in $\R^{am}$ and is Gorenstein of codegree
\[
\codeg(\cP_m^{(a)})=2a+1.
\]
More precisely, for all $q\ge2a+1$,
\begin{equation}\label{eq:intro-interior-translation}
(q\cP_m^{(a)})^\circ\cap\Z^{am}
=
\one_{am}+((q-(2a+1))\cP_m^{(a)}\cap\Z^{am}).
\end{equation}
Consequently
\begin{equation}\label{eq:intro-ehrhart-recip}
L_m^{(a)}(-q)=(-1)^{am}L_m^{(a)}(q-(2a+1)),
\end{equation}
understood as an identity of Ehrhart polynomials.
The Ehrhart series has the form
\[
\Ehr_{\cP_m^{(a)}}(z)=\sum_{q\ge0}L_m^{(a)}(q)z^q
=\frac{h_{m,a}^*(z)}{(1-z)^{am+1}},
\]
where
\begin{equation}\label{eq:intro-hdegree}
\deg h_{m,a}^*(z)=a(m-2).
\end{equation}
Furthermore,
\begin{equation}\label{eq:intro-hsym}
h_{m,a}^*(z)=z^{a(m-2)}h_{m,a}^*(1/z),
\end{equation}
and the coefficient sequence of $h_{m,a}^*(z)$ is unimodal.
\end{theorem}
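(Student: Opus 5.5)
We identify $\cP_m^{(a)}$ with a stable-set polytope and then invoke the standard Ehrhart theory of perfect graphs.

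\textbf{The graph.} Let $G_m^{(a)}$ have vertex set $\{x_{i,j}\}$. Join two vertices when they lie in the same block or in adjacent blocks $B_i,B_{i+1}$. The maximal cliques are then exactly $K_i=B_i\cup B_{i+1}$ for $1\le i\le m-1$. Indeed, a clique cannot meet two blocks at distance at least $2$, so it lies in some $B_i\cup B_{i+1}$, and every $B_i\cup B_{i+1}$ is a clique. Consequently the constraints $R_i+R_{i+1}\le1$ are precisely the clique constraints $\sum_{v\in K}x_v\le1$ over maximal cliques. This identifies $\cP_m^{(a)}$ with the clique-constrained polytope $\mathrm{QSTAB}(G_m^{(a)})$.

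For chordality, $G_m^{(a)}$ is the lexicographic blow-up of the path $P_m$ by cliques $K_a$. Concretely, it is the intersection graph of the intervals $[i,i+1]$, taking $a$ copies of each. Hence it is an interval graph and therefore chordal. Chordal graphs are perfect, and Chvátal's theorem for perfect graphs gives $\mathrm{QSTAB}=\STAB$. So $\cP_m^{(a)}$ is a $0/1$ lattice polytope, namely the stable-set polytope. It is full-dimensional because it contains $0$ and all $e_v$ (equivalently, $\tfrac{1}{2}\cdot\frac{1}{a}\one$ is interior).

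\textbf{Codegree.} Use the known criterion of Ohsugi--Hibi: for a perfect graph $G$, $\STAB(G)$ is Gorenstein if and only if all maximal cliques have the same size $\omega$, and then the codegree is $\omega+1$. Here every maximal clique has size $2a$, so the codegree is $2a+1$.

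I would also verify this directly, since the interior-translation identity \eqref{eq:intro-interior-translation} is requested.
\begin{itemize}
\item A lattice point $x$ is in $(q\cP)^\circ$ iff $x_v\ge1$ for all $v$ and $R_i+R_{i+1}\le q-1$. This uses the facet description: the facets are $x_v\ge0$ and the maximal-clique inequalities, which are facets for perfect graphs, and no other facets occur.
\item Substituting $x=\one+y$ turns these conditions into $y\ge0$ and $R_i(y)+R_{i+1}(y)\le q-1-2a$. These are exactly the conditions for $y\in(q-2a-1)\cP$.
\end{itemize}
This gives the bijection for $q\ge 2a+1$. Ehrhart--Macdonald reciprocity $L(-q)=(-1)^{am}\#(q\cP)^\circ\cap\Z^{am}$ then yields \eqref{eq:intro-ehrhart-recip}, first for $q\ge1$ and then as a polynomial identity.

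\textbf{The $h^*$-polynomial.} The degree of $h^*$ equals $\dim+1-\codeg=am+1-(2a+1)=a(m-2)$. Palindromicity follows from the Gorenstein property, via Stanley's theorem or directly from the reciprocity identity. For unimodality, the polytope is compressed, since stable-set polytopes of perfect graphs are compressed (Ohsugi--Hibi, Sullivant). Therefore every pulling triangulation is unimodular, and in particular it has a regular unimodular triangulation. Bruns--Römer then gives unimodality of $h^*$ for Gorenstein polytopes with a regular unimodular triangulation. More precisely, $h^*$ is the $h$-vector of a simplicial polytope boundary, and the $g$-theorem applies.

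\textbf{Main obstacle.} The delicate point is the facet description used in the interior criterion. One must know that no inequality other than the nonnegativity and maximal-clique inequalities is facet-defining, so that interior lattice points are characterized by strictness in exactly those inequalities. This holds because the $H$-description we start from is already the defining one; strictness in every defining inequality characterizes the interior of a full-dimensional polytope regardless of redundancy. So the obstacle reduces to confirming full-dimensionality, which I would check by exhibiting the explicit interior point above.
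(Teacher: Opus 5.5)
Your proof is correct and mostly follows the paper. The main difference is how you obtain the Gorenstein property and the codegree. The following steps are exactly what the paper does:
\begin{itemize}
\item the identification $\cP_m^{(a)}=\STAB(G_m^{(a)})$ via the maximal cliques $B_i\cup B_{i+1}$ and Chv\'atal's theorem;
\item the interior translation $x=\one_{am}+z$ on lattice points;
\item Ehrhart--Macdonald reciprocity;
\item unimodality from compressedness (Ohsugi--Hibi) together with Bruns--R\"omer.
\end{itemize}

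Minor differences:
\begin{itemize}
\item You get chordality from an interval representation (the intervals $[i,i+1]$, each taken with multiplicity $a$). The paper instead exhibits a perfect elimination ordering block by block. Both are one-line arguments.
\item For Gorenstein and the codegree you cite the Ohsugi--Hibi criterion for perfect graphs: $\STAB(G)$ is Gorenstein iff all maximal cliques have a common size $\omega$, and then the codegree is $\omega+1$. The paper verifies it by hand: $(2a+1)\cP_m^{(a)}-\one_{am}$ is cut out by $-y_{i,j}\le1$ and $R_i(y)+R_{i+1}(y)\le1$, so it is reflexive. The paper's route is self-contained and uses only the explicit $H$-description. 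Yours is shorter but outsources the key step to a structural theorem.
\end{itemize}

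The citation is in fact redundant. Your own interior-translation identity, together with the observation that $(q\cP_m^{(a)})^\circ$ has no lattice points for $q\le 2a$ (since $x\ge\one_{am}$ forces $R_i+R_{i+1}\ge 2a$), already gives Gorenstein of codegree $2a+1$. It yields $\Ehr_{P^\circ}(z)=z^{2a+1}\Ehr_P(z)$, and reciprocity plus Stanley's palindromicity criterion finish the argument.

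One slip: the point $\frac{1}{2a}\one_{am}$ is \emph{not} interior. It has $R_i+R_{i+1}=1$ for every $i$, so it lies on every clique facet. Your closing appeal to ``the explicit interior point above'' therefore rests on a wrong point. Full-dimensionality still holds by your other argument, since $0$ and the $am$ unit vectors lie in $\cP_m^{(a)}$. Alternatively, use $\frac{1}{2a+1}\one_{am}$ or the paper's $\epsilon\one_{am}$.

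Your observation that strict satisfaction of every defining inequality (all with nonzero normals) characterizes the interior, regardless of redundancy, is correct. It makes the facet discussion in your first bullet unnecessary.
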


\begin{remark}
The one-block case $\cP_1^{(a)}=\{x\in\R_{\ge0}^a:R_1\le1\}$ is the standard $a$-dimensional simplex, whose Ehrhart codegree is $a+1$.  Since $a+1\ne2a+1$ for every $a\ge1$, $m=1$ is excluded from Theorem~\ref{thm:intro-ehrhart}, which requires $m\ge2$.  The restriction ensures that the adjacent-block constraints $R_i+R_{i+1}\le1$ are present.
\end{remark}

The rest of the paper is organized into three sections.  Section 2 proves the length-direction transfer formula, the compression identity, the recursive and determinant formulas for $P_q^{(a)}(y)$ and $Q_q^{(a)}(y)$ (where $P_q^{(a)}(y)$ denotes the numerator polynomial, distinct from the polytope $\cP_m^{(a)}$), and the cyclic trace/log-determinant formula.  Section 3 proves the stable-set interpretation and the Ehrhart-theoretic consequences for both path and cyclic families, including the Gorenstein property, Ehrhart reciprocity, palindromicity, unimodality, and the rational odd-cycle distinction.  Section 4 summarizes the results, gives further combinatorial interpretations, and lists open problems.

\section{Proofs of the length-direction generating function formulas}\label{sec:length}

For a vector $u=(u_1,\ldots,u_a)\in\Z_{\ge0}^a$, write $|u|=u_1+\cdots+u_a$.  The number of such vectors with $|u|=s$ is
\begin{equation}\label{eq:w-def}
w_s^{(a)}=\#\{u\in\Z_{\ge0}^a:|u|=s\}=\binom{s+a-1}{a-1}.
\end{equation}
A lattice point of $q\cP_m^{(a)}$ can be viewed as a sequence
\[
(u_1,u_2,\ldots,u_m),
\qquad u_i\in\Z_{\ge0}^a,
\]
satisfying
\begin{equation}\label{eq:block-condition}
|u_i|+|u_{i+1}|\le q,
\qquad 1\le i\le m-1.
\end{equation}
If $r_i=|u_i|$, then the number of choices for the $i$th block with weight $r_i$ is $w_{r_i}^{(a)}$.  Therefore
\begin{equation}\label{eq:height-model}
L_m^{(a)}(q)=
\sum_{\substack{0\le r_1,\ldots,r_m\le q\\ r_i+r_{i+1}\le q}}
\prod_{i=1}^m w_{r_i}^{(a)}.
\end{equation}
This is a weighted height model on a path.

We now prove the transfer formula and compression.  Define the layer-merging matrix $E=E_q^{(a)}$ by
\begin{equation}\label{eq:E-def}
E_{u,r}=\begin{cases}
1,& |u|=r,\\
0,& |u|\ne r,
\end{cases}
\qquad u\in S_q^{(a)},\quad 0\le r\le q.
\end{equation}
Thus $E$ has $\binom{q+a}{a}$ rows and $q+1$ columns.  More precisely, $E$ embeds $\Z^{q+1}$ into $\Z^{|S_q^{(a)}|}$: the $r$th component of a $(q+1)$-dimensional vector is broadcast to every state $u\in S_q^{(a)}$ satisfying $|u|=r$.

\begin{lemma}[Intertwining identity]\label{lem:intertwining}
The matrices $A_q^{(a)}$, $C_q^{(a)}$, and $E_q^{(a)}$ satisfy
\begin{equation}\label{eq:AEEC}
A_q^{(a)}E_q^{(a)}=E_q^{(a)}C_q^{(a)}.
\end{equation}
\end{lemma}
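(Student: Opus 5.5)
The identity \eqref{eq:AEEC} will follow from comparing the two sides entrywise. Both sides are $\binom{q+a}{a}\times(q+1)$ matrices, with rows indexed by $u\in S_q^{(a)}$ and columns by $0\le s\le q$. I will fix such a pair $(u,s)$ and evaluate each side directly from the definitions \eqref{eq:intro-A}, \eqref{eq:intro-C} and \eqref{eq:E-def}.

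\emph{Left side.} By definition,
\[
(A_q^{(a)}E)_{u,s}=\sum_{v\in S_q^{(a)}}\mathbf 1_{\{|u|+|v|\le q\}}\,\mathbf 1_{\{|v|=s\}}.
\]
The factor $\mathbf 1_{\{|v|=s\}}$ restricts the sum to vectors $v$ with $|v|=s$. On that set the first indicator is the constant $\mathbf 1_{\{|u|+s\le q\}}$, so it comes out of the sum. What remains is the number of $v\in\Z_{\ge0}^a$ with $|v|=s$. Any such $v$ automatically lies in $S_q^{(a)}$, because $s\le q$. By \eqref{eq:w-def} this number is $w_s^{(a)}$. Hence
\[
(A_q^{(a)}E)_{u,s}=w_s^{(a)}\,\mathbf 1_{\{|u|+s\le q\}}.
\]

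\emph{Right side.} By definition,
\[
(EC_q^{(a)})_{u,s}=\sum_{r=0}^q \mathbf 1_{\{|u|=r\}}\,w_s^{(a)}\mathbf 1_{\{r+s\le q\}}.
\]
Exactly one index survives, namely $r=|u|$, which lies in $\{0,\ldots,q\}$ since $u\in S_q^{(a)}$. This gives
\[
(EC_q^{(a)})_{u,s}=w_s^{(a)}\,\mathbf 1_{\{|u|+s\le q\}}.
\]

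The two entries agree for every $(u,s)$, which proves \eqref{eq:AEEC}. There is no real obstacle here. The only point that needs care is confirming that all $w_s^{(a)}$ vectors of weight $s$ are actually states in $S_q^{(a)}$, so that none of them is lost in the count on the left side. This holds because $s\le q$.
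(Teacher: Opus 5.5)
Your proof is correct and is the same entrywise comparison the paper uses. The left side reduces to $w_s^{(a)}\mathbf{1}_{\{|u|+s\le q\}}$ by counting the states of weight $s$, and the right side keeps only the term $r=|u|$; your remark that every weight-$s$ vector lies in $S_q^{(a)}$ because $s\le q$ makes explicit a point the paper leaves implicit.
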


\begin{proof}
Fix $u\in S_q^{(a)}$ and $0\le s\le q$.  The $(u,s)$ entry of $A_q^{(a)}E_q^{(a)}$ is
\[
(A_q^{(a)}E_q^{(a)})_{u,s}
=\sum_{v\in S_q^{(a)}} A_q^{(a)}(u,v)E_{v,s}
=\sum_{\substack{v\in S_q^{(a)}\\ |v|=s}}
\mathbf{1}_{\{|u|+|v|\le q\}}.
\]
For every $v$ in this sum, $|v|=s$, so the indicator is independent of the particular vector $v$.  Since there are $w_s^{(a)}$ such vectors, the last expression equals
\[
w_s^{(a)}\mathbf{1}_{\{|u|+s\le q\}}.
\]
On the other hand,
\[
(E_q^{(a)}C_q^{(a)})_{u,s}
=\sum_{r=0}^q E_{u,r}(C_q^{(a)})_{r,s}
=(C_q^{(a)})_{|u|,s}
=w_s^{(a)}\mathbf{1}_{\{|u|+s\le q\}}.
\]
Thus the two matrices have equal entries.
\end{proof}

\begin{lemma}[Power compression]\label{lem:power-compression}
For every $n\ge0$,
\begin{equation}\label{eq:AnEECn}
(A_q^{(a)})^nE_q^{(a)}=E_q^{(a)}(C_q^{(a)})^n.
\end{equation}
\end{lemma}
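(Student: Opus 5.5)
The plan is to argue by induction on $n$, using Lemma~\ref{lem:intertwining} as the only input. For $n=0$ both sides of \eqref{eq:AnEECn} reduce to $E_q^{(a)}$, because $(A_q^{(a)})^0$ and $(C_q^{(a)})^0$ are identity matrices of sizes $\binom{q+a}{a}$ and $q+1$ respectively. Hence the base case holds trivially.

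For the inductive step I assume $(A_q^{(a)})^nE_q^{(a)}=E_q^{(a)}(C_q^{(a)})^n$ and compute, using associativity of matrix multiplication,
\[
(A_q^{(a)})^{n+1}E_q^{(a)}
=(A_q^{(a)})^{n}\bigl(A_q^{(a)}E_q^{(a)}\bigr)
=(A_q^{(a)})^{n}E_q^{(a)}C_q^{(a)}
=E_q^{(a)}(C_q^{(a)})^{n}C_q^{(a)}
=E_q^{(a)}(C_q^{(a)})^{n+1}.
\]
The second equality is Lemma~\ref{lem:intertwining}, and the third is the induction hypothesis. This closes the induction.

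The only point that needs care is the bookkeeping of dimensions. $A_q^{(a)}$ is square of size $N=\binom{q+a}{a}$, $E_q^{(a)}$ is $N\times(q+1)$, and $C_q^{(a)}$ is $(q+1)\times(q+1)$, so every product above is defined and has size $N\times(q+1)$. I expect no genuine obstacle, since the statement is just the intertwining identity iterated.

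The lemma is presumably used next to deduce \eqref{eq:intro-power-compression}. For that I will record two further facts. First, $E_q^{(a)}\one_{q+1}=\one_N$, because each $u\in S_q^{(a)}$ has exactly one weight $|u|\in\{0,\ldots,q\}$. Second, $\one_N^TE_q^{(a)}=\mu_q^{(a)}$, because column $r$ of $E_q^{(a)}$ has $w_r^{(a)}$ ones by \eqref{eq:w-def}. Multiplying \eqref{eq:AnEECn} on the left by $\one_N^T$ and on the right by $\one_{q+1}$ then gives
\[
\one_N^T(A_q^{(a)})^n\one_N=\one_N^T(A_q^{(a)})^nE_q^{(a)}\one_{q+1}=\one_N^TE_q^{(a)}(C_q^{(a)})^n\one_{q+1}=\mu_q^{(a)}(C_q^{(a)})^n\one_{q+1}.
\]
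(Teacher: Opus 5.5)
Your proof is correct and is essentially the paper's argument: induction on $n$ with Lemma~\ref{lem:intertwining} as the only input. The only cosmetic difference is that you factor $(A_q^{(a)})^{n+1}=(A_q^{(a)})^nA_q^{(a)}$ and apply the intertwining identity before the induction hypothesis, whereas the paper factors $A_q^{(a)}(A_q^{(a)})^n$ and applies them in the opposite order; your closing deduction of \eqref{eq:intro-power-compression} via $E_q^{(a)}\one_{q+1}=\one_N$ and $\one_N^TE_q^{(a)}=\mu_q^{(a)}$ also matches the paper's proof of Theorem~\ref{thm:intro-transfer}.
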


\begin{proof}
The case $n=0$ holds trivially since both sides equal $E_q^{(a)}$.  If \eqref{eq:AnEECn} holds for $n$, then by Lemma \ref{lem:intertwining},
\[
(A_q^{(a)})^{n+1}E_q^{(a)}
=A_q^{(a)}E_q^{(a)}(C_q^{(a)})^n
=E_q^{(a)}C_q^{(a)}(C_q^{(a)})^n
=E_q^{(a)}(C_q^{(a)})^{n+1}.
\]
This proves the result by induction.
\end{proof}

\begin{proof}[Proof of Theorem \ref{thm:intro-transfer}]
Recall $\one_N,\one_{q+1}$ from the Introduction.  Since every state belongs to exactly one weight layer,
\[
E_q^{(a)}\one_{q+1}=\one_N.
\]
Moreover, since $(\one_N^T E_q^{(a)})_r=\sum_{u\in S_q^{(a)}}(E_q^{(a)})_{u,r}=\#\{u\in S_q^{(a)}:|u|=r\}=w_r^{(a)}$,
\[
\one_N^T E_q^{(a)}=(w_0^{(a)},w_1^{(a)},\ldots,w_q^{(a)})=\mu_q^{(a)}.
\]
Using Lemma \ref{lem:power-compression}, we obtain
\[
\one_N^T(A_q^{(a)})^n\one_N
=\one_N^T(A_q^{(a)})^nE_q^{(a)}\one_{q+1}
=\one_N^TE_q^{(a)}(C_q^{(a)})^n\one_{q+1}
=\mu_q^{(a)}(C_q^{(a)})^n\one_{q+1}.
\]
This proves \eqref{eq:intro-power-compression}.  Taking $n=m-1$ proves \eqref{eq:intro-L-transfer}.  Multiplying both sides of \eqref{eq:intro-power-compression} by $y^n$ and summing over $n\ge0$ gives \eqref{eq:intro-Fq}; the sum $(I-yC_q^{(a)})^{-1}=\sum_{n\ge0}y^n(C_q^{(a)})^n$ is valid as a formal power series in $\Z^{(q+1)\times(q+1)}[[y]]$ since $(I-yC_q^{(a)})\big|_{y=0}=I$ is invertible.
\end{proof}

\begin{proof}[Proof of Theorem \ref{thm:intro-PQ}]
We prove the recursive formula.  Set $X=\cF_q^{(a)}(y)$; the unknown $X$ is treated as a formal indeterminate over $\Z[y]$ during the midpoint elimination, and the height equations below will yield an explicit rational formula for $X$ in terms of the recursion coefficients $\alpha_k,\beta_k,\gamma_k,\delta_k$.
For $0\le r\le q$, let $G_r$ be the generating function for paths that start at height $r$ and continue to the right, where each subsequent step is marked by $y$.  Then
\begin{equation}\label{eq:Gr-eq}
G_r=1+y\sum_{s=0}^{q-r} w_sG_s.
\end{equation}
Put
\begin{equation}\label{eq:Hk-def}
\mathcal{H}_k=\sum_{s=0}^k w_sG_s,
\qquad 0\le k\le q.
\end{equation}
Then \eqref{eq:Gr-eq} becomes
\begin{equation}\label{eq:GrH}
G_r=1+y\mathcal{H}_{q-r},
\end{equation}
By \eqref{eq:height-model}, weighting the first block at height $r$ by $w_r^{(a)}$ and summing over all $r$ gives
\[
X=\cF_q^{(a)}(y)=\sum_{r=0}^q w_rG_r=\mathcal{H}_q.
\]
We introduce the shorthand $\mathcal{L}_k=\mathcal{H}_k$ and $\mathcal{R}_k=\mathcal{H}_{q-k}$ to track the forward and backward partial sums of $\mathcal{H}$ simultaneously; the two recurrences \eqref{eq:R-rec}--\eqref{eq:L-rec} below allow us to determine $X$ as a rational function in $y$ by comparing $\mathcal{L}_M$ with $\mathcal{R}_M$ at the midpoint.  Explicitly:
\[
\mathcal{L}_k=\mathcal{H}_k,
\qquad
\mathcal{R}_k=\mathcal{H}_{q-k}.
\]
Since $w_0=1$, using \eqref{eq:GrH} with $r=0$,
\[
\mathcal{L}_0=\mathcal{H}_0=w_0G_0=1+y\mathcal{H}_q=1+yX,
\qquad
\mathcal{R}_0=\mathcal{H}_q=X.
\]
Thus
\[
\mathcal{L}_0=yX+1,
\qquad
\mathcal{R}_0=X.
\]
Assume
\[
\mathcal{L}_k=\alpha_kX+\beta_k,
\qquad
\mathcal{R}_k=\gamma_kX+\delta_k.
\]
Using
\[
\mathcal{H}_{q-k}=\mathcal{H}_{q-k-1}+w_{q-k}G_{q-k}
=\mathcal{H}_{q-k-1}+w_{q-k}(1+y\mathcal{H}_k),
\]
we get
\begin{equation}\label{eq:R-rec}
\mathcal{R}_{k+1}=\mathcal{R}_k-w_{q-k}(1+y\mathcal{L}_k).
\end{equation}
Similarly,
\[
\mathcal{H}_{k+1}=\mathcal{H}_k+w_{k+1}G_{k+1}
=\mathcal{H}_k+w_{k+1}(1+y\mathcal{H}_{q-k-1}),
\]
so
\begin{equation}\label{eq:L-rec}
\mathcal{L}_{k+1}=\mathcal{L}_k+w_{k+1}(1+y\mathcal{R}_{k+1}).
\end{equation}
Since $\alpha_k,\beta_k,\gamma_k,\delta_k$ are polynomials in $y$ independent of $X$ (as they are determined by the recursion with initial values at $k=0$ that are independent of $X$), equating coefficients of $X$ and constant terms in \eqref{eq:R-rec} and \eqref{eq:L-rec} gives precisely \eqref{eq:intro-gamma-rec}--\eqref{eq:intro-beta-rec} for the required range $0\le k<M$.

If $q=2M$, then $\mathcal{L}_M=\mathcal{H}_M=\mathcal{R}_M$.  Therefore
\[
(\alpha_M-\gamma_M)X+(\beta_M-\delta_M)=0,
\]
which uniquely determines $X=(\beta_M-\delta_M)/(\gamma_M-\alpha_M)$, giving \eqref{eq:intro-even-F}.  The denominator $\gamma_M-\alpha_M$ is not the zero polynomial: by induction on $k$, the recursion \eqref{eq:intro-gamma-rec} and \eqref{eq:intro-alpha-rec} preserves $\alpha_k|_{y=0}=0$ and $\gamma_k|_{y=0}=1$ for all $0\le k\le M$ (both hold at $k=0$, and neither recurrence changes the value at $y=0$), so $(\gamma_M-\alpha_M)|_{y=0}=1-0=1\ne0$, hence $\gamma_M-\alpha_M\ne0$.  In particular, the denominator already has constant term $1$, matching the visible-denominator convention without further sign adjustment.

If $q=2M+1$, then $\mathcal{R}_M=\mathcal{H}_{M+1}$ and $\mathcal{L}_M=\mathcal{H}_M$.  Since
\[
\mathcal{H}_{M+1}=\mathcal{H}_M+w_{M+1}(1+y\mathcal{H}_M),
\]
we have
\[
\mathcal{R}_M=\mathcal{L}_M+w_{M+1}(1+y\mathcal{L}_M).
\]
Substituting $\mathcal{L}_M=\alpha_MX+\beta_M$ and $\mathcal{R}_M=\gamma_MX+\delta_M$, equating the coefficients of $X$ and the constant terms, and solving for $X$ gives \eqref{eq:intro-odd-F}.  The denominator $\gamma_M-\alpha_M-w_{M+1}y\alpha_M$ is nonzero: by the same inductive argument, $\gamma_M|_{y=0}=1$ and $\alpha_M|_{y=0}=0$, so $(\gamma_M-\alpha_M-w_{M+1}y\alpha_M)|_{y=0}=1-0-0=1\ne0$, hence this polynomial is not identically zero.
\end{proof}

\begin{proof}[Proof of Theorem \ref{thm:intro-det}]
The identity \eqref{eq:intro-Q-det} holds by definition of $Q_q^{(a)}(y)=\det(I-yC_q^{(a)})$.  For \eqref{eq:intro-P-adj}: since $\det(I-yC_q^{(a)})\big|_{y=0}=1\ne0$, Cramer's rule gives $P_q^{(a)}(y)=Q_q^{(a)}(y)\cdot\cF_q^{(a)}(y)=\mu_q^{(a)}\adj(I-yC_q^{(a)})\one_{q+1}$, which is a polynomial since each entry of $\adj(I-yC_q^{(a)})$ is a polynomial in $y$.  The rank-one identity \eqref{eq:intro-P-rank-one} follows from the matrix determinant lemma (valid since $\mathbf{M}\big|_{y=0}=I$ is invertible): for $\mathbf{M}=I-yC_q^{(a)}$,
\[
\det(\mathbf{M}+\one_{q+1}\mu_q^{(a)})
=\det(\mathbf{M})\bigl(1+\mu_q^{(a)}\mathbf{M}^{-1}\one_{q+1}\bigr)
=Q_q^{(a)}(y)+P_q^{(a)}(y).
\]

It remains to prove \eqref{eq:intro-Q-subset}.  Let
\[
H_q=(h_{r,s})_{0\le r,s\le q},
\qquad
h_{r,s}=\mathbf{1}_{\{r+s\le q\}},
\]
and let
\[
D_q^{(a)}=\operatorname{diag}(w_0^{(a)},w_1^{(a)},\ldots,w_q^{(a)}).
\]
Then $C_q^{(a)}=H_qD_q^{(a)}$.  Expanding by principal minors gives
\begin{equation}\label{eq:principal-minor-expansion}
Q_q^{(a)}(y)=
\sum_{k=0}^{q+1}(-y)^k
\sum_{\substack{S\subseteq\{0,\ldots,q\}\\ |S|=k}}
\det(C_q^{(a)}[S,S]).
\end{equation}
Since $C_q^{(a)}[S,S]=H_q[S,S]D_q^{(a)}[S,S]$,
\[
\det(C_q^{(a)}[S,S])=
\det(H_q[S,S])\prod_{s\in S}w_s^{(a)}.
\]
If $S=\{s_1<\cdots<s_k\}$, the matrix $H_q[S,S]$ has rows consisting of initial strings of ones followed by zeros; specifically, the $i$th row has length $\rho_i(S)=\#\{s_j\in S:s_i+s_j\le q\}$.  Since $S$ is ordered as $s_1<\cdots<s_k$, we have $q-s_1\ge q-s_2\ge\cdots\ge q-s_k$, and each $\rho_i(S)$ counts elements $s_j$ satisfying $s_j\le q-s_i$.  Therefore the row lengths are weakly decreasing: $\rho_1(S)\ge\rho_2(S)\ge\cdots\ge\rho_k(S)$.  If $\rho_i(S)=\rho_{i+1}(S)$ for some $i$, then rows $i$ and $i+1$ of $H_q[S,S]$ are identical (both consist of $\rho_i(S)$ ones followed by zeros), so the determinant is zero; if $\rho_i(S)=0$ for some $i$, the $i$th row is all zeros and the determinant is zero.  Hence the determinant can be nonzero only when the row lengths are the $k$ distinct positive integers $k,k-1,\ldots,1$.  This condition is exactly the $q$-admissibility condition (defined in the Introduction above \eqref{eq:intro-Q-subset}).  In that case, after reversing the order of the columns, the matrix becomes upper triangular with diagonal entries one.  Hence
\[
\det(H_q[S,S])=(-1)^{k(k-1)/2}.
\]
Substituting this into \eqref{eq:principal-minor-expansion} and combining the sign factor with the $(-y)^k$ factor gives
\[
(-y)^k\cdot(-1)^{k(k-1)/2}
=(-1)^k y^k\cdot(-1)^{k(k-1)/2}
=(-1)^{k+k(k-1)/2}y^k
=(-1)^{k(k+1)/2}y^k,
\]
which matches the exponent in \eqref{eq:intro-Q-subset} and completes the proof.
\end{proof}

\begin{proof}[Proof of Theorem \ref{thm:intro-cyclic-transfer}]
The trace formula follows by expanding the trace.  Indeed,
\[
\operatorname{tr}\bigl((C_q^{(a)})^m\bigr)
=\sum_{r_1,\ldots,r_m=0}^q
(C_q^{(a)})_{r_1,r_2}(C_q^{(a)})_{r_2,r_3}\cdots(C_q^{(a)})_{r_m,r_1}.
\]
Since
\[
(C_q^{(a)})_{r_i,r_{i+1}}=w_{r_{i+1}}^{(a)}\mathbf{1}_{\{r_i+r_{i+1}\le q\}},
\]
where the subscript $i+1$ is read cyclically, the product of the weights is
\[
\prod_{i=1}^m w_{r_i}^{(a)}
\]
and the product of the indicators imposes exactly the cyclic constraints
\[
r_i+r_{i+1}\le q,
\qquad i\in\Z/m\Z.
\]
Thus for any $m\ge1$ the trace $\operatorname{tr}((C_q^{(a)})^m)$ counts closed height-walks of length $m$ subject to the cyclic constraint.  For genuine cyclic block polytopes, which require $m\ge3$, this equals $L_{m,\mathrm{cyc}}^{(a)}(q)$.  The $m=1$ and $m=2$ terms yield the formal counts $T_1$ and $T_2$ that do not correspond to lattice-point counts of polytopes ($\cC_1^{(a)}$ and $\cC_2^{(a)}$ are not defined) and are subtracted in \eqref{eq:intro-cyclic-ge3}.

For the closed-walk series, use the resolvent expansion
\[
(I-yC_q^{(a)})^{-1}=\sum_{j\ge0}y^j(C_q^{(a)})^j.
\]
Then
\[
\sum_{m\ge1}\operatorname{tr}\bigl((C_q^{(a)})^m\bigr)y^{m-1}
=\operatorname{tr}\left(C_q^{(a)}(I-yC_q^{(a)})^{-1}\right).
\]
Since $I-yC_q^{(a)}$ is invertible over $\mathbb{Q}(y)$ (as its determinant $Q_q^{(a)}(y)$ is a nonzero element), Jacobi's formula for the derivative of a determinant gives
\[
\frac{d}{dy}\det(I-yC_q^{(a)})
=-\det(I-yC_q^{(a)})\operatorname{tr}\left(C_q^{(a)}(I-yC_q^{(a)})^{-1}\right).
\]
Dividing by $Q_q^{(a)}(y)=\det(I-yC_q^{(a)})$ proves \eqref{eq:intro-cyclic-logdet}.  To obtain \eqref{eq:intro-cyclic-ge3}, write $\Omega_q^{(a)}(y)=T_1+T_2y+\sum_{m\ge3}L_{m,\mathrm{cyc}}^{(a)}(q)y^{m-1}$; subtracting $T_1+T_2y$ and dividing by $y^2$ yields the left-hand side of \eqref{eq:intro-cyclic-ge3} with denominator $Q_q^{(a)}(y)$.  The numerator identity \eqref{eq:intro-cyclic-numerator} follows immediately: $N_{q,\mathrm{cyc}}^{(a)}(y)=-(Q_q^{(a)})'(y)$.
\end{proof}

As a consequence, if
\[
Q_q^{(a)}(y)=1+c_1y+c_2y^2+\cdots+c_dy^d,
\]
then the cyclic numerator is
\[
N_{q,\mathrm{cyc}}^{(a)}(y)=-c_1-2c_2y-\cdots-dc_dy^{d-1}.
\]
The same denominator $Q_q^{(a)}(y)$ governs both the path and cyclic length generating functions.  For the cyclic model with $q=1$ and $q=2$ respectively, the closed-walk generating functions $\Omega_q^{(a)}(y)$ take the following explicit forms (where $b=\binom{a+1}{2}$):
\[
\Omega_1^{(a)}(y)=\frac{1+2ay}{1-y-ay^2},
\qquad
\Omega_2^{(a)}(y)=\frac{a+1+2by-3aby^2}{1-(a+1)y-by^2+aby^3}.
\]

\begin{lemma}[Degree of the visible denominator]\label{lem:degree-Q}
Let $d=\deg Q_q^{(a)}$.  Then $d=q+1$, and $\deg P_q^{(a)}\le q=d-1$.
\end{lemma}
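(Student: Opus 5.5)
The degree of $Q_q^{(a)}(y)=\det(I-yC_q^{(a)})$ equals the number of nonzero eigenvalues (counted algebraically), and its top coefficient is $(-1)^{q+1}\det C_q^{(a)}$. So the plan is to show $\det C_q^{(a)}\neq0$; together with the trivial bound $\deg Q_q^{(a)}\le q+1$, this gives $d=q+1$.

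Factor $C_q^{(a)}=H_qD_q^{(a)}$ as in the proof of Theorem~\ref{thm:intro-det}. Then $\det D_q^{(a)}=\prod_{s=0}^q w_s^{(a)}\ge1$, since every $w_s^{(a)}=\binom{s+a-1}{a-1}\ge1$. For $H_q$, row $r$ consists of $q-r+1$ ones followed by zeros. Reversing the column order makes $H_q$ upper triangular with unit diagonal, so $\det H_q=(-1)^{q(q+1)/2}$. Equivalently, $S=\{0,\dots,q\}$ is itself $q$-admissible, since $\rho_i(S)=q+2-i$. Hence the $k=q+1$ term of \eqref{eq:intro-Q-subset} is $(-1)^{(q+1)(q+2)/2}y^{q+1}\prod_{s=0}^q w_s^{(a)}\neq0$, which confirms $d=q+1$ directly from the subset formula.

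For the numerator, use \eqref{eq:intro-P-adj}: $P_q^{(a)}(y)=\mu_q^{(a)}\adj(I-yC_q^{(a)})\one_{q+1}$. Each entry of $\adj(I-yC_q^{(a)})$ is a signed $q\times q$ minor of $I-yC_q^{(a)}$. Every entry of that matrix has degree at most $1$ in $y$, so each minor is a polynomial of degree at most $q$. A linear combination with constant coefficients keeps this bound, so $\deg P_q^{(a)}\le q=d-1$.

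The main point needing care is that "visible" means no cancellation is performed, so both degrees refer to the polynomials exactly as defined by the determinant and adjugate formulas; the argument above uses only those formulas. The only substantive step is the nonvanishing of the leading coefficient, which is the triangularity of $H_q$ after column reversal. If one prefers the recursion of Theorem~\ref{thm:intro-PQ}, one could instead track leading coefficients of $\alpha_k$ and $\gamma_k$ by induction, but the determinant route is cleaner and I would use it.
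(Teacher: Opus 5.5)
Your proposal is correct and follows essentially the same route as the paper: factor $C_q^{(a)}=H_qD_q^{(a)}$, use $\det H_q=(-1)^{q(q+1)/2}$ (via column reversal, equivalently the $q$-admissibility of $\{0,\ldots,q\}$) and $\det D_q^{(a)}\neq0$ to get a nonzero leading coefficient $(-1)^{q+1}\det C_q^{(a)}$ of the reverse characteristic polynomial, and bound $\deg P_q^{(a)}$ by the degree $q$ of the adjugate's $q\times q$ minors. Your extra cross-check against the $k=q+1$ term of \eqref{eq:intro-Q-subset} is consistent, since $(-1)^{q+1}(-1)^{q(q+1)/2}=(-1)^{(q+1)(q+2)/2}$.
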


\begin{proof}
Recall $D_q^{(a)}$ and $H_q$ from the proof of Theorem~\ref{thm:intro-det}, so that $C_q^{(a)}=H_qD_q^{(a)}$ with $D_q^{(a)}$ nonsingular (all $w_s^{(a)}>0$).  The full index set $S=\{0,1,\ldots,q\}$ is $q$-admissible, since for $s_i=i-1$ (so $1\le i\le q+1$), $\rho_i(S)=\#\{s_j\in S:i-1+s_j\le q\}=\#\{s_j\in\{0,\ldots,q\}:s_j\le q-i+1\}=q-i+2=(q+1)+1-i=k+1-i$ with $k=q+1$, exactly the admissibility condition.  Hence, by the computation in the proof of Theorem~\ref{thm:intro-det} (with $S$ the full set), $\det H_q=\det(H_q[S,S])=(-1)^{(q+1)q/2}=\pm1$.  Thus $\det C_q^{(a)}=\det H_q\cdot\det D_q^{(a)}\ne0$.  Since $Q_q^{(a)}(y)=\det(I-yC_q^{(a)})$ is the reverse characteristic polynomial of the $(q+1)\times(q+1)$ matrix $C_q^{(a)}$, its degree equals $q+1$ precisely when $\det C_q^{(a)}\ne0$; hence $d=q+1$.  Moreover, $\deg P_q^{(a)}\le q=d-1$, since the adjugate of the $(q+1)\times(q+1)$ matrix $I-yC_q^{(a)}$ has entries of degree at most $q$.
\end{proof}

\begin{corollary}[Linear recurrences in the length direction]\label{cor:linear-rec}
Write $Q_q^{(a)}(y)=1+c_1y+\cdots+c_dy^d$ with $d=q+1$ as in Lemma~\ref{lem:degree-Q}.  Then the path sequence $L_m^{(a)}(q)$ satisfies the linear recurrence
\begin{equation}\label{eq:length-linear-rec}
L_{m+d}^{(a)}(q)+c_1L_{m+d-1}^{(a)}(q)+\cdots+c_dL_m^{(a)}(q)=0
\end{equation}
for all $m\ge1$.
\end{corollary}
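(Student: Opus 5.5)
The plan is to read the recurrence off the rational form of $\cF_q^{(a)}(y)$ by clearing denominators and comparing coefficients. By Theorem~\ref{thm:intro-transfer} and the definition of the visible numerator and denominator,
\[
Q_q^{(a)}(y)\,\cF_q^{(a)}(y)=P_q^{(a)}(y)
\]
holds in $\Z[[y]]$, where $\cF_q^{(a)}(y)=\sum_{n\ge0}L_{n+1}^{(a)}(q)y^n$. Lemma~\ref{lem:degree-Q} gives $\deg Q_q^{(a)}=d=q+1$ and $\deg P_q^{(a)}\le d-1$.

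Writing $Q_q^{(a)}(y)=\sum_{j=0}^d c_jy^j$ with $c_0=1$, the coefficient of $y^N$ on the left is $\sum_{j=0}^{d}c_jL_{N-j+1}^{(a)}(q)$. Here we use the convention $L_k=0$ for $k\le0$, but this convention only matters for $N<d$. For every $N\ge d$ the right-hand side has zero coefficient, because $\deg P_q^{(a)}\le d-1$. Setting $N=m+d-1$ with $m\ge1$ makes all indices $m+d-j\ge m\ge1$, so no convention is needed, and we get
\[
\sum_{j=0}^d c_jL_{m+d-j}^{(a)}(q)=0 .
\]
This is exactly \eqref{eq:length-linear-rec}.

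As an alternative check, one can also apply Cayley--Hamilton to $C_q^{(a)}$. The characteristic polynomial is $\lambda^d Q_q^{(a)}(1/\lambda)=\lambda^d+c_1\lambda^{d-1}+\cdots+c_d$. Multiplying the identity $\sum_j c_j(C_q^{(a)})^{d-j}=0$ on the left by $\mu_q^{(a)}(C_q^{(a)})^{m-1}$ and on the right by $\one_{q+1}$, then applying \eqref{eq:intro-L-transfer}, gives the same recurrence for all $m\ge1$.

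There is no real obstacle here. The only point needing care is the index bookkeeping: the series starts at $y^0\leftrightarrow L_1$, and the recurrence holds from $m=1$ rather than from some later index. That is exactly where the bound $\deg P_q^{(a)}\le d-1$ from Lemma~\ref{lem:degree-Q}, or the Cayley--Hamilton argument, is needed.
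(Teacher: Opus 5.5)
Your argument is correct and is essentially the paper's proof: you clear the denominator in $Q_q^{(a)}(y)\cF_q^{(a)}(y)=P_q^{(a)}(y)$, use $\deg P_q^{(a)}\le d-1$ from Lemma~\ref{lem:degree-Q}, and read off the coefficient of $y^{m+d-1}$, and your index check correctly confirms that only $L_k$ with $k\ge1$ appear. Your Cayley--Hamilton alternative is also valid and slightly more self-contained, since it needs neither the adjugate degree bound nor $\det C_q^{(a)}\ne0$ (if the degree of $Q_q^{(a)}$ were smaller than $q+1$, one would simply have $c_d=0$).
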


\begin{proof}
Since $Q_q^{(a)}(y)\cF_q^{(a)}(y)=P_q^{(a)}(y)$ with $\deg P_q^{(a)}\le d-1$ by Lemma~\ref{lem:degree-Q}, the coefficient of $y^{n}$ in $Q_q^{(a)}(y)\cF_q^{(a)}(y)$ vanishes for $n\ge d$.  Writing $\cF_q^{(a)}(y)=\sum_{m\ge1}L_m^{(a)}(q)y^{m-1}$ and extracting the coefficient of $y^{m+d-1}$ for $m\ge1$ gives exactly \eqref{eq:length-linear-rec}.
\end{proof}

\begin{example}\label{ex:a2q2-path-cycle}
For $a=2$ and $q=2$ one has
\[
C_2^{(2)}=\begin{pmatrix}
1&2&3\\
1&2&0\\
1&0&0
\end{pmatrix},
\qquad
Q_2^{(2)}(y)=1-3y-3y^2+6y^3,
\]
where $(C_2^{(2)})_{r,s}=(s+1)\cdot\mathbf{1}_{\{r+s\le2\}}$ by \eqref{eq:intro-C}, and $Q_2^{(2)}(y)=\det(I-yC_2^{(2)})$ is computed by direct expansion.
The path and cyclic length series have the same visible denominator but different numerators:
\[
\cF_2^{(2)}(y)=\frac{6-3y-6y^2}{1-3y-3y^2+6y^3},
\qquad
\Omega_2^{(2)}(y)=\frac{3+6y-18y^2}{1-3y-3y^2+6y^3}.
\]
This illustrates the open-walk versus closed-walk distinction: the path numerator comes from the adjugate expression, whereas the cyclic visible numerator is $-(Q_2^{(2)})'(y)$.

The first few values of the path sequence are $L_1^{(2)}(2)=6$, $L_2^{(2)}(2)=21$, $L_3^{(2)}(2)=78$, verified by expanding $\cF_2^{(2)}(y)=6+21y+78y^2+\cdots$ as a formal power series.

By Corollary~\ref{cor:linear-rec}, since $d=3=q+1$, the sequence satisfies the degree-$3$ linear recurrence
\[
L_{m+3}^{(2)}(2)=3L_{m+2}^{(2)}(2)+3L_{m+1}^{(2)}(2)-6L_m^{(2)}(2)
\]
for all $m\ge1$, as a consequence of $Q_2^{(2)}(y)\cdot\cF_2^{(2)}(y)=P_2^{(2)}(y)$ being a polynomial.
\end{example}

For small $q$, one obtains compact formulas.  Put $b=\binom{a+1}{2}$.  Then
\[
\cF_0^{(a)}(y)=\frac{1}{1-y},
\qquad
\cF_1^{(a)}(y)=\frac{a+1+ay}{1-y-ay^2},
\]
and
\begin{equation}\label{eq:F2-general-a}
\cF_2^{(a)}(y)=
\frac{1+a+b+b(1-a)y-ab y^2}
{1-(a+1)y-by^2+ab y^3}.
\end{equation}
For $a=2$, this gives the formula displayed in Example~\ref{ex:a2q2-path-cycle}.

\section{Proofs of the Ehrhart-theoretic properties}\label{sec:ehrhart}

We use $G_m^{(a)}$ for the path-based graph defined below and $G_{m,\mathrm{cyc}}^{(a)}$ for its cyclic analogue, both arising as clique blow-ups.

Let $G_m^{(a)}$ be the graph with vertex set
\[
V(G_m^{(a)})=\{v_{i,j}:1\le i\le m,
\ 1\le j\le a\}.
\]
Two distinct vertices $v_{i,j}$ and $v_{k,\ell}$ are adjacent whenever $|i-k|\le1$.  In particular, all vertices within the same block $B_i=\{v_{i,1},\ldots,v_{i,a}\}$ form a clique, and any two adjacent blocks $B_i\cup B_{i+1}$ together form a clique of size $2a$.  For $m\ge2$, these inter-block cliques are the \emph{maximal} cliques of $G_m^{(a)}$: a vertex $v_{k,\ell}$ outside $B_i\cup B_{i+1}$ satisfies $|k-i|\ge2$ or $|k-(i+1)|\ge2$; in the former case $v_{k,\ell}$ is non-adjacent to every vertex of $B_i$, and in the latter it is non-adjacent to every vertex of $B_{i+1}$, so in either case $v_{k,\ell}$ cannot be added to $B_i\cup B_{i+1}$ while preserving the clique property.

\begin{lemma}\label{lem:graph-chordal}
For every $a\ge1$ and $m\ge1$, the graph $G_m^{(a)}$ is chordal and hence perfect.
\end{lemma}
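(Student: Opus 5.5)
I plan to prove chordality by exhibiting a perfect elimination ordering, and then to get perfectness from the standard fact that chordal graphs are perfect. Order the vertices block by block from left to right,
\[
v_{1,1},\ldots,v_{1,a},\;v_{2,1},\ldots,v_{2,a},\;\ldots,\;v_{m,1},\ldots,v_{m,a},
\]
and eliminate them in this order. Equivalently, I will show that each vertex $v_{i,j}$ is simplicial in the subgraph induced by the vertices that come after it together with $v_{i,j}$ itself.

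Consider the later neighbours of $v_{i,j}$. They are the vertices $v_{i,\ell}$ with $\ell>j$ and all vertices $v_{i+1,\ell}$, the latter only when $i<m$. Vertices in blocks $i-1$ or earlier are not later, and blocks $k\ge i+2$ are not adjacent to $v_{i,j}$ because $|k-i|\ge2$. So the later neighbourhood is contained in $B_i\cup B_{i+1}$ (or in $B_m$ when $i=m$). Any two distinct vertices of this set have block indices differing by at most $1$, so they are adjacent by the definition of $G_m^{(a)}$. Hence the later neighbourhood is a clique, $v_{i,j}$ is simplicial at its elimination step, and the ordering is a perfect elimination ordering. By the Fulkerson--Gross characterization, $G_m^{(a)}$ is chordal.

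Alternatively, one can argue directly. Take a cycle $C$ of length at least $4$ in $G_m^{(a)}$ and let $i$ be the smallest block index that $C$ meets, at some vertex $x$. Both cycle-neighbours of $x$ lie in $B_i\cup B_{i+1}$, which is a clique. They are distinct, and since $C$ has length at least $4$ they are not consecutive on $C$, so the edge between them is a chord.

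The case $m=1$ is trivial because $G_1^{(a)}=K_a$. Finally, chordal graphs are perfect by the classical results of Berge, Hajnal--Sur\'anyi, and Dirac, and I would cite this. I expect no step to be a genuine obstacle. The only points needing care are checking that vertices within the same block precede or follow one another consistently, which holds because the whole block is a clique, and including the boundary case $i=m$.
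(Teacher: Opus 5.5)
Your proof is correct and takes the paper's approach: a block-by-block perfect elimination ordering in which each vertex's later neighbours lie in the clique $B_i\cup B_{i+1}$, followed by the fact that chordal graphs are perfect. The paper eliminates from $B_m$ down to $B_1$ and you go from $B_1$ up to $B_m$, which is the mirror image under the automorphism $i\mapsto m+1-i$; your alternative argument via the smallest block index on a cycle is also valid and avoids citing the Fulkerson--Gross characterization.
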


\begin{proof}
Order the vertices so that all vertices of block $B_m$ come first, then block $B_{m-1}$, and so on down to block $B_1$ (within each block the order is arbitrary).  We claim this is a perfect elimination ordering.  When we reach the first vertex $v_{m,j}$ of block $B_m$, its remaining neighbors are the other vertices in $B_m$ together with all vertices in $B_{m-1}$; this neighborhood is a clique (since $B_m$ is a clique, $B_{m-1}$ is a clique, and every vertex in $B_m$ is adjacent to every vertex in $B_{m-1}$).  After all of $B_m$ is eliminated, each vertex $v_{m-1,j}$ has remaining neighbors in $B_{m-1}$ (the others still uneliminated) and all vertices in $B_{m-2}$, again a clique by the same argument.  Continuing inductively through each block shows the ordering is a perfect elimination ordering.  (When $m=2$, after eliminating all vertices of $B_2$ the remaining graph is $B_1$ alone, which is a single clique and trivially chordal; this is the base case of the induction.)  Hence $G_m^{(a)}$ is chordal, and every chordal graph is perfect.
\end{proof}

For a graph $G$, let $\STAB(G)$ denote the convex hull of incidence vectors of stable sets of $G$.  For perfect graphs, the stable-set polytope is cut out by nonnegativity and clique inequalities; this is a standard consequence of anti-blocking polyhedral theory and the perfect graph theorem; see \cite{Chvatal1975,Fulkerson1971} for the original proofs, \cite{Lovasz1972} for the perfect graph theorem, and \cite{GrotschelLovaszSchrijver1988} for a comprehensive treatment.

\begin{proposition}\label{prop:stab-equality}
For $m\ge2$,
\[
\cP_m^{(a)}=\STAB(G_m^{(a)}).
\]
For $m=1$, $\cP_1^{(a)}$ is the stable-set polytope of the complete graph on $a$ vertices.
\end{proposition}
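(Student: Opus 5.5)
The plan is to invoke the clique-inequality description of stable-set polytopes of perfect graphs (Chv\'atal, Fulkerson, Lov\'asz). Since $G_m^{(a)}$ is perfect by Lemma~\ref{lem:graph-chordal},
\[
\STAB(G_m^{(a)})=\Bigl\{x\in\R_{\ge0}^{am}:\textstyle\sum_{v\in K}x_v\le1\ \text{for every clique }K\Bigr\}.
\]
It then suffices to show that this clique system is equivalent to the system defining $\cP_m^{(a)}$. We identify the coordinate $x_{i,j}$ with the vertex $v_{i,j}$ throughout.

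First, the clique inequalities only need to be imposed for maximal cliques. Because $x\ge0$, the inequality for a subclique $K\subseteq K'$ follows from the inequality for $K'$. For $m\ge2$ we would verify that the maximal cliques are exactly the sets $B_i\cup B_{i+1}$, $1\le i\le m-1$. The argument above the lemma shows these sets are maximal. It remains to show that every clique $K$ lies in one of them. If $K$ met blocks $B_i$ and $B_k$ with $k\ge i+2$, some vertices of $K$ would be nonadjacent, which is impossible. So the block indices met by $K$ form a set of diameter at most $1$, hence are contained in some $\{i,i+1\}$ with $1\le i\le m-1$; this needs $m\ge2$ so that such an $i$ exists. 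The clique inequality for $B_i\cup B_{i+1}$ reads $R_i+R_{i+1}\le1$. Therefore the clique system coincides exactly with the defining inequalities \eqref{eq:intro-polytope}, which gives $\cP_m^{(a)}=\STAB(G_m^{(a)})$.

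For $m=1$, the graph $G_1^{(a)}$ is the complete graph $K_a$, which is perfect. Its unique maximal clique is $B_1$, so the clique description gives $\{x\ge0:R_1\le1\}=\cP_1^{(a)}$. A direct argument is also available here: the stable sets of $K_a$ are $\emptyset$ and the singletons, so $\STAB(K_a)=\operatorname{conv}(0,e_1,\dots,e_a)$, which is the standard simplex.

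The only step needing care is the claim that every clique sits inside two consecutive blocks, and this follows from the adjacency rule $|i-k|\le1$. Everything else reduces to citing the perfect-graph theorem on clique inequalities, so I do not expect a serious obstacle.
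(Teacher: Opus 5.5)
Your proposal is correct and follows essentially the same route as the paper: perfection via Lemma~\ref{lem:graph-chordal}, the clique-inequality description of $\STAB$ for perfect graphs, and identification of the maximal cliques with the sets $B_i\cup B_{i+1}$. You also supply two steps the paper leaves implicit, namely that every clique lies in some $B_i\cup B_{i+1}$ (the paper only argues that these sets are maximal) and that nonnegativity reduces the clique system to its maximal cliques.
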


\begin{proof}
For $m\ge2$, the maximal cliques of $G_m^{(a)}$ are exactly
\[
B_i\cup B_{i+1},
\qquad 1\le i\le m-1.
\]
Since the graph is perfect, $\STAB(G_m^{(a)})$ is described by $x\ge0$ and the clique inequalities
\[
\sum_{j=1}^a x_{i,j}+\sum_{j=1}^a x_{i+1,j}\le1,
\qquad 1\le i\le m-1.
\]
These are exactly the defining inequalities of $\cP_m^{(a)}$.  The case $m=1$ is immediate.
\end{proof}

\begin{lemma}[Compressed stable-set polytopes]\label{lem:compressed-unimodal}
Let $G$ be a perfect graph.  Then $\STAB(G)$ is compressed.  In particular, its reverse-lexicographic pulling triangulations are regular and unimodular.  If, in addition, $\STAB(G)$ is Gorenstein, then its Ehrhart $h^*$-polynomial is unimodal.
\end{lemma}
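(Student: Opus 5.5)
The plan is to verify compressedness through Sullivant's facet-width characterization and then cite known results for the rest. A lattice polytope is compressed, meaning every pulling triangulation is unimodular, if and only if it has \emph{facet width one}: for every facet-defining inequality $\langle c,x\rangle\le b$ with $c$ primitive integral, the vertices lie on just two parallel hyperplanes $\langle c,x\rangle=b$ and $\langle c,x\rangle=b-1$ (Sullivant, following Ohsugi--Hibi).

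\textbf{Step 1: the facets.} Since $G$ is perfect, $\STAB(G)$ is cut out by the nonnegativity inequalities $x_v\ge0$ and the clique inequalities $\sum_{v\in K}x_v\le1$, one for each maximal clique $K$. This is the same standard fact already used in Proposition~\ref{prop:stab-equality}. Every facet therefore comes from one of these inequalities, and each has a primitive $0/1$ normal vector.

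\textbf{Step 2: width one on each facet.} The vertices of $\STAB(G)$ are the incidence vectors $\chi_S$ of stable sets $S$.
\begin{itemize}
\item For $x_v\ge0$, the value $x_v(\chi_S)$ lies in $\{0,1\}$.
\item For a clique $K$, a stable set meets $K$ in at most one vertex, so $\sum_{v\in K}x_v(\chi_S)\in\{0,1\}$.
\end{itemize}
So every facet has width one, and $\STAB(G)$ is compressed by the characterization.

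\textbf{Step 3: the triangulations.} Compressedness gives that all pulling triangulations are unimodular. Reverse-lexicographic pulling triangulations are regular; this is standard, since pulling triangulations are obtained from iterated height functions. So the reverse-lexicographic ones are regular and unimodular. One can equivalently phrase this as the toric ideal having a squarefree initial ideal under every reverse lexicographic order, as in Ohsugi--Hibi.

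\textbf{Step 4: unimodality under Gorenstein.} Assume $\STAB(G)$ is also Gorenstein. I would apply the theorem of Bruns and R\"omer~\cite{BrunsRoemer2007}: a Gorenstein lattice polytope with a regular unimodular triangulation has a unimodal $h^*$-polynomial. Their argument reduces to a reflexive polytope with a regular unimodular triangulation, and then uses the $g$-theorem for the boundary of a simplicial polytope (Stanley~\cite{Stanley1980}, with Athanasiadis~\cite{Athanasiadis2004} treating the compressed case directly). The regular unimodular triangulation from Step 3 supplies exactly this hypothesis.

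\textbf{Expected obstacle.} The only delicate point is the bookkeeping in the facet-width criterion. One needs a full-dimensional polytope, which holds because every singleton $\{v\}$ is stable, and a primitive normal for each facet. Both follow from the $0/1$ clique description, so the argument rests on correctly citing Sullivant's criterion and the Bruns--R\"omer theorem rather than on any new computation.
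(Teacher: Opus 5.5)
Your proposal is correct and follows the same route as the paper: compressedness of $\STAB(G)$ for perfect $G$ gives a regular unimodular (pulling) triangulation, and Bruns--R\"omer then turns ``Gorenstein plus regular unimodular triangulation'' into unimodality of $h^*$. The only difference is that you verify compressedness yourself, via the width-one criterion applied to the nonnegativity and clique inequalities, which is exactly the argument behind the Ohsugi--Hibi result the paper simply cites; your aside attributing the $g$-theorem to \cite{Stanley1980} is a miscitation (that entry is Stanley's paper on decompositions of rational polytopes), but it plays no logical role in your argument.
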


\begin{proof}
Ohsugi and Hibi \cite{OhsugiHibi2001} proved that stable-set polytopes of perfect graphs are compressed (admitting regular unimodular pulling triangulations); see also \cite{Sullivant2006} for the general theory of compressed polytopes.  The unimodality assertion then follows from Bruns and R\"omer \cite{BrunsRoemer2007}: a Gorenstein lattice polytope admitting a regular unimodular triangulation has a unimodal $h^*$-polynomial.
\end{proof}

\begin{proof}[Proof of Theorem \ref{thm:intro-ehrhart}]
The stable-set assertion follows from Lemma \ref{lem:graph-chordal} and Proposition \ref{prop:stab-equality}.  The vertices of $\cP_m^{(a)}$ have a simple description.  Choose an independent set in the path on $m$ block positions, and then color each chosen position by one of $a$ choices.  Hence the number of vertices is
\[
\sum_{S\in\Ind(P_m)}a^{|S|}.
\]
This is the independence polynomial of the path evaluated at $a$.

We now prove the Gorenstein property and reciprocity.  The polytope $\cP_m^{(a)}$ is full-dimensional: for sufficiently small $\epsilon>0$, the point all of whose coordinates equal $\epsilon$ satisfies all inequalities strictly.  Let $d=am$ and $c=2a+1$, and assume first that $q\ge c$.  A lattice point $x\in(q\cP_m^{(a)})^\circ$ satisfies
\[
x_{i,j}\ge1
\]
(since $\cP_m^{(a)}\subseteq\R_{\ge0}^{am}$, every interior point has $x_{i,j}>0$; as $x_{i,j}\in\Z$, this forces $x_{i,j}\ge1$) and
\[
R_i(x)+R_{i+1}(x)<q,
\qquad 1\le i\le m-1.
\]
Since the left-hand side is integral, the last condition is equivalent to
\[
R_i(x)+R_{i+1}(x)\le q-1.
\]
Write $x=\one_{am}+z$ (where $\one_{am}\in\Z^{am}$ is the all-one vector and $z\in\Z^{am}$ is a translation variable, distinct from the generating function variable $y$).  Then $z\ge0$, and
\[
R_i(z)+R_{i+1}(z)
=R_i(x)+R_{i+1}(x)-2a
\le q-1-2a=q-(2a+1)=q-c.
\]
Thus $z\in(q-c)\cP_m^{(a)}\cap\Z^{am}$.  Conversely, if $z\in(q-c)\cP_m^{(a)}\cap\Z^{am}$, then $x=\one_{am}+z$ has positive coordinates and
\[
R_i(x)+R_{i+1}(x)
=2a+R_i(z)+R_{i+1}(z)
\le2a+q-c=q-1<q,
\]
so $x\in(q\cP_m^{(a)})^\circ\cap\Z^{am}$.  This proves \eqref{eq:intro-interior-translation}.  In particular, the first dilation containing an interior lattice point is $c=2a+1$, and the unique interior lattice point of $c\cP_m^{(a)}$ is $\one_{am}$.

Ehrhart--Macdonald reciprocity \cite{Macdonald1971,Stanley1980} gives
\[
\#((q\cP_m^{(a)})^\circ\cap\Z^{am})=(-1)^{am}L_m^{(a)}(-q).
\]
Combining this with \eqref{eq:intro-interior-translation} yields \eqref{eq:intro-ehrhart-recip}.

The translated polytope
\[
(2a+1)\cP_m^{(a)}-\one_{am}
\]
is reflexive.  Indeed, the origin lies in its interior (since $\one_{am}$ is an interior lattice point of $c\cP_m^{(a)}$, so $0$ is an interior point of $c\cP_m^{(a)}-\one_{am}$), and its defining inequalities take the form
\[
-y_{i,j}\le1,
\qquad
\sum_{j=1}^a y_{i,j}+\sum_{j=1}^a y_{i+1,j}\le1;
\]
the facet-defining inequalities among them therefore have all right-hand sides equal to $1$ and all normal vectors primitive and integral.  By definition \cite{Hibi1992}, a lattice polytope containing the origin in its interior is reflexive if and only if all facet-defining inequalities $\langle n, y\rangle\le1$ have primitive integral normal $n$; this condition is satisfied here.  Hence $\cP_m^{(a)}$ is Gorenstein of codegree $2a+1$.  Equivalently, the Ehrhart numerator is palindromic of degree
\[
d+1-c=am+1-(2a+1)=a(m-2),
\]
which proves \eqref{eq:intro-hdegree} and \eqref{eq:intro-hsym}.  This is consistent with the general theory of reflexive and Gorenstein polytopes \cite{Hibi1992,Stanley1993}.  In particular, $\codeg(\cP_m^{(a)})=2a+1$ is independent of the number of blocks $m$; compare the recent codegree stability results for stable-set polytopes in \cite{MatsushitaTsuchiya2025}.

It remains to justify unimodality.  By Proposition \ref{prop:stab-equality}, $\cP_m^{(a)}$ is the stable-set polytope of a perfect graph, and we have already proved that it is Gorenstein.  Lemma \ref{lem:compressed-unimodal} therefore applies and proves the unimodality of $h_{m,a}^*(z)$.  This completes the proof.
\end{proof}

We also need a simple vertex fact for the odd cyclic case.

\begin{lemma}[Half-integrality of cyclic block vertices]\label{lem:cyclic-half-integral}
Let $m$ be odd.  Every vertex of $\cC_m^{(a)}$ is half-integral.  Moreover, the point obtained by putting $1/2$ in one fixed coordinate of each block and zero in all remaining coordinates is a vertex.  Hence $\cC_m^{(a)}$ has denominator exactly $2$.
\end{lemma}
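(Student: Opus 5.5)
To prove the half-integrality claim, I would reduce to the block-weight polytope. Every vertex $x$ of $\cC_m^{(a)}$ is determined by $am$ linearly independent tight constraints, drawn from $x_{i,j}=0$ and $R_i+R_{i+1}=1$. The first step is to show that at a vertex each block has at most one nonzero coordinate. Suppose block $i$ had two positive coordinates $x_{i,j},x_{i,\ell}$. Moving along $\pm\epsilon(e_{i,j}-e_{i,\ell})$ keeps every $R_k$ fixed and keeps nonnegativity for small $\epsilon$. So $x$ would be the midpoint of two feasible points, which is a contradiction.

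With each block having at most one nonzero coordinate, the vertex is determined by a choice of support coordinate $j_i$ in each block and a vertex $r=(R_1,\ldots,R_m)$ of the scalar polytope $\{r\ge0:\ r_i+r_{i+1}\le1,\ i\in\Z/m\Z\}$. This reduction needs justification. If $r$ were not a vertex of the scalar polytope, then a segment through $r$ inside it lifts, with the supports fixed, to a segment through $x$ inside $\cC_m^{(a)}$. Care is needed for blocks with $R_i=0$; there the lifted segment must keep $R_i=0$, which holds because such an $r_i=0$ is itself a tight constraint in the scalar polytope. The scalar polytope is the fractional stable-set polytope of the odd cycle $C_m$. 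For that polytope I would invoke the classical half-integrality of vertices of fractional stable-set (edge-constraint) polytopes, going back to Balinski and Nemhauser–Trotter. I would also sketch the reason directly. Take the tight edge constraints restricted to the support of $r$. If some component of the resulting graph on positive coordinates is bipartite, then perturbing by $\pm\epsilon$ with alternating signs on its two sides, or by a one-sided perturbation if it touches no tight edge, contradicts extremality. So every component contains an odd cycle, which for $C_m$ forces the whole cycle with all $r_i=1/2$. Otherwise the components are paths, and the vertex is integral. Hence $x$ is half-integral.

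Next I would verify that the explicit point $p$, with $x_{i,1}=1/2$ for every $i$ and all other coordinates $0$, is a vertex. The tight constraints at $p$ are the $a-1$ equalities $x_{i,j}=0$ for $j\ge2$ in each block, together with the $m$ cyclic equalities $R_i+R_{i+1}=1$. Restricted to the coordinates $x_{i,1}$, the cyclic system has coefficient matrix $I+P$, where $P$ is the cyclic shift. Its determinant is $1-(-1)^m=2\ne0$ for odd $m$, so the tight system has full rank $am$ and $p$ is the unique solution. Finally, $p$ is non-integral while all vertices are half-integral, so the least common denominator of the vertex coordinates is exactly $2$.

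The main obstacle is the rigorous reduction in the first two steps, especially checking that the lifted perturbations stay feasible in blocks of weight zero. The determinant computation for $p$ is routine.
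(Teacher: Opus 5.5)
Your proposal is correct and follows the paper's proof essentially step for step. Both arguments show at most one positive coordinate per block and lift any non-extremality of $r=(R_1,\ldots,R_m)$ in $\operatorname{FRAC}(C_m)$ back to $x$, with $d_i=0$ forced where $r_i=0$. Both then use half-integrality of the vertices of $\operatorname{FRAC}(C_m)$ and obtain the explicit vertex from nonsingularity of the odd-cycle system. Your computation $\det(I+P)=1-(-1)^m=2$ just makes the paper's ``unique solution because $m$ is odd'' explicit.

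The only real difference is how half-integrality of $\operatorname{FRAC}(C_m)$ is obtained. You cite Balinski and Nemhauser--Trotter, which is legitimate. The paper argues directly: a zero coordinate cuts the cycle into paths, handled by total unimodularity, and no zero coordinate forces all $m$ edge constraints tight, hence $r\equiv 1/2$.

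One caveat: your supplementary ``direct'' sketch is wrong as written. Take the integral vertex $r=(1,0,0)$ of $\operatorname{FRAC}(C_3)$. The support component $\{1\}$ is bipartite, yet no perturbation is feasible, because its tight edges go to zero coordinates. So ``bipartite component $\Rightarrow$ contradiction'' fails, and your two conclusions (``every component contains an odd cycle'' and ``otherwise \dots\ integral'') contradict each other.

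The fix is to run the argument on the fractional coordinates $\{i:0<r_i<1\}$. A tight edge at such an $i$ has $r_j=1-r_i\in(0,1)$, so tight edges stay inside this set. The alternating perturbation on a bipartite component is then feasible, and you get: $r$ is integral, or the whole odd cycle is tight and $r\equiv 1/2$. Since your main line rests on the citation, this does not affect the validity of the proof.
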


\begin{proof}
Let $x$ be a vertex of $\cC_m^{(a)}$.  First, in each block at most one coordinate of $x$ can be positive.  Indeed, if two coordinates in the same block were positive, then adding a sufficiently small $\varepsilon$ to one of them and subtracting $\varepsilon$ from the other would preserve all block sums $R_i$ and all defining inequalities, contradicting the fact that $x$ is a vertex.

Put $r_i=R_i(x)$.  We claim that $r=(r_1,\ldots,r_m)$ is a vertex of
\[
\operatorname{FRAC}(C_m)=\{r\in\R_{\ge0}^m:r_i+r_{i+1}\le1\text{ for all }i\in\Z/m\Z\},
\]
the fractional stable-set polytope of the cycle.  If not, then $r\pm\varepsilon d\in\operatorname{FRAC}(C_m)$ for some nonzero vector $d$ and all sufficiently small $\varepsilon>0$.  Whenever $r_i=0$, feasibility of both $r_i+\varepsilon d_i$ and $r_i-\varepsilon d_i$ forces $d_i=0$.  Whenever $r_i>0$, the block $i$ has exactly one positive coordinate $x_{i,k_i}$ (for some $k_i$), and we set $x_{i,k_i}\mapsto x_{i,k_i}\pm\varepsilon d_i$.  For $\varepsilon>0$ sufficiently small, the perturbed point satisfies $x_{i,k_i}\pm\varepsilon d_i>0$ (since $x_{i,k_i}>0$) and all other coordinates remain unchanged and nonneg\-ative.  Since $R_i$ changes to $r_i\pm\varepsilon d_i$ and the cyclic constraints $r_j+r_{j+1}\le1$ are satisfied for the perturbed $(r_1\pm\varepsilon d_1,\ldots,r_m\pm\varepsilon d_m)$ by hypothesis, the perturbed point remains feasible for $\cC_m^{(a)}$.  This gives two distinct feasible points of $\cC_m^{(a)}$ whose midpoint is $x$, again contradicting the fact that $x$ is a vertex.  Thus $r$ is a vertex of $\operatorname{FRAC}(C_m)$.

It remains to recall the elementary vertex structure of $\operatorname{FRAC}(C_m)$.  If a vertex $r$ of $\operatorname{FRAC}(C_m)$ has zero coordinates, those zeros split the remaining positive support into path components; the constraint matrix of each such path component is a submatrix of a path incidence matrix, which is totally unimodular, so all coordinates in that component take values in $\{0,1\}$.  If no coordinate is zero, then at a vertex all edge inequalities must be active.  For an odd cycle, the linear system
\[
r_i+r_{i+1}=1,\qquad i\in\Z/m\Z,
\]
has the unique solution $r_1=\cdots=r_m=1/2$.  Hence every vertex of $\operatorname{FRAC}(C_m)$ is half-integral.  Since each vertex of $\cC_m^{(a)}$ has at most one positive coordinate in each block and those positive coordinates are the corresponding $r_i$, every vertex of $\cC_m^{(a)}$ is half-integral.

Finally, the point with one coordinate equal to $1/2$ in every block and all other coordinates zero satisfies all cyclic inequalities at equality and all remaining coordinate inequalities at equality.  The $m$ cyclic equalities on the chosen coordinates have a unique solution because $m$ is odd, and the zero-coordinate equations fix all remaining coordinates.  Thus this point is a vertex.  It is not integral, so the denominator is exactly $2$.
\end{proof}

We next prove the cyclic Ehrhart statement.

\begin{proof}[Proof of Theorem \ref{thm:intro-cyclic-ehrhart}]
Let $G_{m,\mathrm{cyc}}^{(a)}$ be the clique blow-up of the cycle $C_m$: each cyclic position is replaced by a clique of size $a$, and adjacent cyclic positions are joined by all possible edges.  When $m$ is even, the cycle $C_m$ is bipartite and hence perfect; clique substitution preserves perfection.  Thus $G_{m,\mathrm{cyc}}^{(a)}$ is perfect.  Its maximal cliques are the unions of two adjacent blocks, all of size $2a$.  The clique description of stable-set polytopes of perfect graphs then gives
\[
\cC_m^{(a)}=\STAB(G_{m,\mathrm{cyc}}^{(a)})
\]
for even $m$.  Hence $\cC_m^{(a)}$ is a lattice polytope.

The Gorenstein index is proved by the same interior-translation argument as in the path case.  Let $c=2a+1$.  A lattice point of $(q\cC_m^{(a)})^\circ$ has all coordinates at least $1$ (since $\cC_m^{(a)}\subseteq\R_{\ge0}^{am}$, interior points satisfy $x_{i,j}>0$; as $x_{i,j}\in\Z$ this forces $x_{i,j}\ge1$) and satisfies
\[
R_i(x)+R_{i+1}(x)<q
\]
for every cyclic index $i$.  Since the left-hand side is integral, this is equivalent to $R_i(x)+R_{i+1}(x)\le q-1$.  Writing $x=\one_{am}+z$ gives
\[
R_i(z)+R_{i+1}(z)\le q-1-2a=q-c.
\]
Conversely, every lattice point $z\in(q-c)\cC_m^{(a)}$ gives an interior lattice point $\one_{am}+z\in(q\cC_m^{(a)})^\circ$.  Therefore
\[
(q\cC_m^{(a)})^\circ\cap\Z^{am}
=\one_{am}+((q-c)\cC_m^{(a)}\cap\Z^{am}).
\]
In particular, the first dilation containing an interior lattice point is $c=2a+1$, and the unique interior lattice point in $c\cC_m^{(a)}$ is $\one_{am}$.

Moreover, the translated polytope $(2a+1)\cC_m^{(a)}-\one_{am}$ is reflexive.  Indeed, the origin lies in its interior, and its defining inequalities take the form
\[
-y_{i,j}\le1,
\qquad
\sum_{j=1}^a y_{i,j}+\sum_{j=1}^a y_{i+1,j}\le1,
\quad i\in\Z/m\Z;
\]
the facet-defining inequalities among them therefore have all right-hand sides equal to $1$ and all normal vectors primitive and integral.  By definition \cite{Hibi1992}, this means $(2a+1)\cC_m^{(a)}-\one_{am}$ is reflexive, hence $\cC_m^{(a)}$ is a lattice Gorenstein polytope of codegree $2a+1$.  The degree of the Ehrhart numerator is
\[
am+1-(2a+1)=a(m-2).
\]
Palindromicity follows from the Gorenstein property, and Ehrhart--Macdonald reciprocity \cite{Macdonald1971,Stanley1980} gives \eqref{eq:intro-cyclic-recip-even}.  Since $G_{m,\mathrm{cyc}}^{(a)}$ is perfect (proved above) and $\cC_m^{(a)}=\STAB(G_{m,\mathrm{cyc}}^{(a)})$ is Gorenstein, Lemma \ref{lem:compressed-unimodal} applies and gives unimodality of $h_{m,a}^{*,\mathrm{cyc}}(z)$.

Now assume $m$ is odd.  By Lemma \ref{lem:cyclic-half-integral}, the polytope $\cC_m^{(a)}$ has denominator exactly $2$.  Therefore its Ehrhart counting function is a quasipolynomial of period dividing $2$.  This places the odd cyclic block family in the same rational Ehrhart setting as fractional stable-set polytopes of non-bipartite graphs \cite{HamanoHibiOhsugi2018,Miyazaki2023}.

The same interior-translation argument above applies to lattice points in all integral dilates with $q\ge 2a+1$.  Rational Ehrhart--Macdonald reciprocity \cite[Theorem~4.4]{BeckRobins2015} then gives \eqref{eq:intro-cyclic-recip-odd} as an identity of quasipolynomials: restricted to each residue class modulo $2$, both sides are polynomials in $q$ that agree on all integers $q\ge 2a+1$ in that class, and a polynomial identity holding on an infinite set of integers implies equality as polynomials.
\end{proof}

For the path polytopes $\cP_m^{(a)}$, the first few $h^*$-polynomials are listed in Table \ref{tab:path-hstar}.  These values were computed from the defining lattice-point counts using the standard Ehrhart-series relation, and verified using SageMath's \texttt{LatticePolytope} class and its \texttt{ehrhart\_series()} method.  They illustrate the palindromicity and unimodality proved above.
\begin{table}[ht]
\centering
\caption{Some $h^*$-polynomials of the path block polytopes $\cP_m^{(a)}$.}
\label{tab:path-hstar}
\begin{tabular}{c c l}
\toprule
$a$ & $m$ & $h_{m,a}^*(z)$ \\
\midrule
1 & 3 & $1+z$ \\
1 & 4 & $1+3z+z^2$ \\
1 & 5 & $1+7z+7z^2+z^3$ \\
2 & 3 & $1+4z+z^2$ \\
2 & 4 & $1+12z+27z^2+12z^3+z^4$ \\
2 & 5 & $1+32z+203z^2+368z^3+203z^4+32z^5+z^6$ \\
3 & 3 & $1+9z+9z^2+z^3$ \\
3 & 4 & $1+27z+162z^2+282z^3+162z^4+27z^5+z^6$ \\
\bottomrule
\end{tabular}
\end{table}

\section{Summary, combinatorial interpretations, and open problems}\label{sec:summary}

We studied path and cyclic block polytopes from two complementary viewpoints.

\textit{Length direction.}  Fixing the dilation parameter $q$ gives a transfer-matrix enumeration.  The original path transfer matrix has size $\binom{q+a}{a}$, but it admits a compression to a $(q+1)\times(q+1)$ weighted height matrix.  This yields a rational length generating function and explicit recursive formulas for its visible numerator and visible denominator.  These results are related in method and theme to weighted-graph and graph-polytope enumerations \cite{BonaJu2006,BonaJuYoshida2007,XinXuZhangZhong2023,XinZhong2023,XinZhongZhou2026}, though the block structure and stable-set polytope interpretation are specific to the present family.  For the cyclic model, the same visible denominator occurs and the visible numerator is simply the negative derivative of that denominator.

\textit{Ehrhart direction.}  Fixing the length $m\ge2$ gives stable-set polytopes of perfect graphs for the path family and for even cyclic polytopes.  This gives the Gorenstein index $2a+1$, Ehrhart reciprocity, and palindromic unimodal $h^*$-polynomials.  Odd cyclic polytopes ($m\ge3$ odd) lead instead to rational polytopes of denominator exactly two and to Ehrhart quasipolynomials of period dividing two.

There are several further combinatorial interpretations.  At dilation level $q=1$, each block weight is either $0$ or $1$.  Hence a lattice point of $\cP_m^{(a)}$ is the same as an independent set in the path $P_m$, with each selected vertex colored in one of $a$ colors:
\[
L_m^{(a)}(1)=\sum_{S\in\Ind(P_m)}a^{|S|}.
\]
For general $q$, the model becomes a capacitated independent-set model: vertex $i$ receives an integer height $r_i\in\{0,\ldots,q\}$ and adjacent heights must satisfy $r_i+r_{i+1}\le q$.  The compressed matrix $C_q^{(a)}$ is the weighted adjacency matrix of a directed graph on $\{0,1,\ldots,q\}$, with an edge $r\to s$ of weight $w_s^{(a)}$ whenever $r+s\le q$.  The quantity $\mu_q^{(a)}(C_q^{(a)})^{m-1}\one_{q+1}$ is then the total weight of all directed walks of length $m-1$.

The visible denominator $\det(I-yC_q^{(a)})$ admits a signed cycle-cover interpretation, and the visible numerator arises from the adjugate formula as a signed path-cycle family enumerator.  These combinatorial interpretations are related in method and theme to rational generating functions in magic-labelling enumeration and constant-term approaches to Ehrhart-series computations \cite{XinXuZhangZhong2023,XinZhangZhouZhong2025,XinZhongZhou2026}; the connections are methodological.

Finally, $G_m^{(a)}$ is the $a$-fold clique blow-up of $P_m$: each path vertex is replaced by a clique of size $a$, and each edge by a complete bipartite graph between the endpoint cliques.

For reproducibility, all examples in the paper can be verified by the following finite procedures: construct $C_q^{(a)}$ from \eqref{eq:intro-C}, compute $Q_q^{(a)}(y)=\det(I-yC_q^{(a)})$, compute the path numerator by \eqref{eq:intro-P-adj}, compute the cyclic numerator by \eqref{eq:intro-cyclic-numerator}, and obtain $h^*$-polynomials from the first $am+1$ values of the Ehrhart counting function.  These steps are purely symbolic and use exact integer arithmetic; they can be implemented in any computer algebra system such as Maple, SageMath, Mathematica, or Normaliz.

We close with several open problems and possible extensions.

\begin{enumerate}[label=\textbf{Problem \arabic*.}, leftmargin=2.7cm]
\item \textbf{Gamma-positivity.}
Theorem \ref{thm:intro-ehrhart} proves that $h_{m,a}^*(z)$ is palindromic and unimodal.  Is it always $\gamma$-nonnegative?  Equivalently, can one write
\[
h_{m,a}^*(z)=\sum_j\gamma_j z^j(1+z)^{a(m-2)-2j}
\]
with $\gamma_j\ge0$?  This problem is motivated by the broader role of $\gamma$-positivity in combinatorics and geometry \cite{Athanasiadis2018Gamma}.

\item \textbf{A direct combinatorial interpretation of $h^*$.}
Find an explicit descent-like statistic on colored objects naturally associated with $G_m^{(a)}$ whose distribution is $h_{m,a}^*(z)$.

\item \textbf{Closed forms in the length direction.}
The denominator $Q_q^{(a)}(y)$ has an admissible-subset formula.  Are there more compact determinant evaluations or orthogonal-polynomial descriptions for fixed $a$, especially for $a=1$ and $a=2$?

\item \textbf{Odd cyclic rational Ehrhart theory.}  (See also \cite{Miyazaki2023} for related almost-Gorenstein properties of cycle stable-set polytopes.)
The even cyclic family has the same Gorenstein index and unimodality behavior as the path family, while the odd cyclic family is rational of denominator two.  Determine whether the generalized rational Ehrhart numerator in the odd cyclic case is always unimodal or admits a gamma-type nonnegative expansion.

\item \textbf{Periodic capacities.}
Study systems of the form
\[
R_i+R_{i+1}\le b_i,
\]
where $b_i$ is periodic or comes from a finite alphabet.  Which parts of the compression and Gorenstein theory survive?

\item \textbf{Higher overlap windows.}
Replace adjacent two-block constraints by $k$-block constraints
\[
R_i+R_{i+1}+\cdots+R_{i+k-1}\le1.
\]
Can one obtain analogous transfer-matrix recurrences and Ehrhart-theoretic classifications?

\item \textbf{Asymptotic behavior.}
For fixed $a,q$, analyze the dominant pole of $\cF_q^{(a)}(y)$ and obtain precise asymptotics for $L_m^{(a)}(q)$ as $m\to\infty$.

\item \textbf{Connections with stable-set rings.}
Further investigate how the algebra of stable-set rings controls the numerator polynomials in this family, in light of recent work on toric rings and stable-set polytopes \cite{DavisKohl2022,HamanoHibiOhsugi2018,HibiStamate2021,MatsudaOhsugiShibata2019,MatsushitaTsuchiya2025,Miyazaki2021,Miyazaki2023,OhsugiHibi2006}.
\end{enumerate}


\begin{thebibliography}{99}

\bibitem{Athanasiadis2004}
C. A. Athanasiadis,
\emph{$h^*$-vectors, Eulerian polynomials and stable polytopes of graphs},
Electron. J. Combin. \textbf{11} (2004), no. 2, Research Paper 6, 13 pp.

\bibitem{Athanasiadis2018Gamma}
C. A. Athanasiadis,
\emph{Gamma-positivity in combinatorics and geometry},
S\'em. Lothar. Combin. \textbf{77} (2018), Article B77i, 64 pp.\ (arXiv:1711.05983).

\bibitem{BeckRobins2015}
M. Beck and S. Robins,
\emph{Computing the Continuous Discretely: Integer-Point Enumeration in Polyhedra},
2nd ed., Undergraduate Texts in Mathematics, Springer, New York, 2015.

\bibitem{BonaJu2006}
M. B\'ona and H.-K. Ju,
\emph{Enumerating solutions of a system of linear inequalities related to magic squares},
Ann. Comb. \textbf{10} (2006), no. 2, 179--191.

\bibitem{BonaJuYoshida2007}
M. B\'ona, H.-K. Ju, and R. Yoshida,
\emph{On the enumeration of certain weighted graphs},
Discrete Appl. Math. \textbf{155} (2007), no. 11, 1481--1496.

\bibitem{BrunsRoemer2007}
W. Bruns and T. Roemer,
\emph{$h$-vectors of Gorenstein polytopes},
J. Combin. Theory Ser. A \textbf{114} (2007), no. 1, 65--76.

\bibitem{Chvatal1975}
V. Chv\'atal,
\emph{On certain polytopes associated with graphs},
J. Combin. Theory Ser. B \textbf{18} (1975), 138--154.

\bibitem{DavisKohl2022}
R. Davis and F. Kohl,
\emph{Perfectly matchable set polynomials and $h^*$-polynomials for stable set polytopes of complements of graphs},
preprint, arXiv:2207.14759 (2022).

\bibitem{Ehrhart1962}
E. Ehrhart,
\emph{Sur les poly\`edres rationnels homoth\'etiques \`a $n$ dimensions},
C. R. Acad. Sci. Paris \textbf{254} (1962), 616--618.

\bibitem{EngstromKohl2018}
A. Engstr\"om and F. Kohl,
\emph{Transfer-matrix methods meet Ehrhart theory},
Adv. Math. \textbf{330} (2018), 1--37.

\bibitem{Fulkerson1971}
D. R. Fulkerson,
\emph{Blocking and anti-blocking pairs of polyhedra},
Math. Programming \textbf{1} (1971), 168--194.

\bibitem{GrotschelLovaszSchrijver1988}
M. Gr\"otschel, L. Lov\'asz, and A. Schrijver,
\emph{Geometric Algorithms and Combinatorial Optimization},
Algorithms and Combinatorics, vol. 2, Springer, Berlin, 1988.

\bibitem{HamanoHibiOhsugi2018}
G. Hamano, T. Hibi, and H. Ohsugi,
\emph{Ehrhart series of fractional stable set polytopes of finite graphs},
Ann. Comb. \textbf{22} (2018), no. 3, 563--573.

\bibitem{Hibi1992}
T. Hibi,
\emph{Dual polytopes of rational convex polytopes},
Combinatorica \textbf{12} (1992), no. 2, 237--240.

\bibitem{HibiStamate2021}
T. Hibi and D. I. Stamate,
\emph{Stable set rings which are Gorenstein on the punctured spectrum},
preprint, arXiv:2108.09912 (2021).

\bibitem{Lovasz1972}
L. Lov\'asz,
\emph{Normal hypergraphs and the perfect graph conjecture},
Discrete Math. \textbf{2} (1972), no. 3, 253--267.

\bibitem{Macdonald1971}
I. G. Macdonald,
\emph{Polynomials associated with finite cell-complexes},
J. London Math. Soc. (2) \textbf{4} (1971), 181--192.

\bibitem{MatsudaOhsugiShibata2019}
K. Matsuda, H. Ohsugi, and K. Shibata,
\emph{Toric rings and ideals of stable set polytopes},
Mathematics \textbf{7} (2019), no. 7, 613.

\bibitem{MatsushitaTsuchiya2025}
K. Matsushita and A. Tsuchiya,
\emph{Codegree and regularity of stable set polytopes},
Algebraic Combinatorics \textbf{8} (2025), no. 6, 1743--1751.

\bibitem{Miyazaki2021}
M. Miyazaki,
\emph{On the Gorenstein property of the Ehrhart ring of the stable set polytope of an $h$-perfect graph},
Int. Electron. J. Algebra \textbf{30} (2021), 269--284.

\bibitem{Miyazaki2023}
M. Miyazaki,
\emph{Non-Gorenstein locus and almost Gorenstein property of the Ehrhart ring of the stable set polytope of a cycle graph},
Taiwanese J. Math. \textbf{27} (2023), no. 3, 441--459.

\bibitem{OhsugiHibi2001}
H. Ohsugi and T. Hibi,
\emph{Convex polytopes all of whose reverse lexicographic initial ideals are squarefree},
Proc. Amer. Math. Soc. \textbf{129} (2001), no. 9, 2541--2546.

\bibitem{OhsugiHibi2006}
H. Ohsugi and T. Hibi,
\emph{Special simplices and Gorenstein toric rings},
J. Combin. Theory Ser. A \textbf{113} (2006), no. 4, 718--725.

\bibitem{Sullivant2006}
S. Sullivant,
\emph{Compressed polytopes and statistical disclosure limitation},
Tohoku Math. J. (2) \textbf{58} (2006), no. 3, 433--445.

\bibitem{Stanley1980}
R. P. Stanley,
\emph{Decompositions of rational convex polytopes},
Ann. Discrete Math. \textbf{6} (1980), 333--342.

\bibitem{Stanley1993}
R. P. Stanley,
\emph{A monotonicity property of $h$-vectors and $h^*$-vectors},
European J. Combin. \textbf{14} (1993), no. 3, 251--258.

\bibitem{Stanley1996}
R. P. Stanley,
\emph{Combinatorics and Commutative Algebra},
2nd ed., Progress in Mathematics, vol. 41, Birkh\"auser Boston, 1996.

\bibitem{XinXuZhangZhong2023}
G. Xin, X. Xu, C. Zhang, and Y. Zhong,
\emph{On magic distinct labellings of simple graphs},
J. Symbolic Comput. \textbf{119} (2023), 22--37.

\bibitem{XinZhangZhouZhong2025}
G. Xin, C. Zhang, Y. Zhou, and Y. Zhong,
\emph{The constant term algebra of type A: The structure},
Adv. Math. \textbf{465} (2025), Paper No. 110154.

\bibitem{XinZhong2020}
G. Xin and Y. Zhong,
\emph{On parity unimodality of $q$-Catalan polynomials},
Electron. J. Combin. \textbf{27} (2020), no. 1, Paper No. 1.3.

\bibitem{XinZhong2023}
G. Xin and Y. Zhong,
\emph{Proving some conjectures on Kekule numbers for certain benzenoids by using Chebyshev polynomials},
Adv. in Appl. Math. \textbf{145} (2023), Paper No. 102479.

\bibitem{XinZhongZhou2026}
G. Xin, Y. Zhong, and Y. Zhou,
\emph{Magic labelling enumeration on pseudo-line graphs and pseudo-cycle graphs},
arXiv:2603.09614.

\end{thebibliography}
\end{document}